\numberwithin{equation}{section}
    \newcommand{\BC}{{\mathbb {C}}} 
     \newcommand{\BF}{{\mathbb {F}}}
    \newcommand{\BQ}{{\mathbb {Q}}} \newcommand{\BR}{{\mathbb {R}}}
     \newcommand{\BZ}{{\mathbb {Z}}}
    \newcommand{\CG}{{\mathcal {G}}} 
    \newcommand{\CK}{{\mathcal {K}}} 
    \newcommand{\CO}{{\mathcal {O}}}
    \renewcommand{\mod}{\ \mathrm{mod}\ }\renewcommand{\Re}{{\mathrm{Re}}}
    \theoremstyle{plain}
    \newtheorem{thm}{Theorem}[section] \newtheorem{cor}[thm]{Corollary}
    \newtheorem{lem}[thm]{Lemma}  \newtheorem{prop}[thm]{Proposition}
\newtheorem{exm}[thm]{Example}
\theoremstyle{remark} 
\theoremstyle{remark} 
\theoremstyle{remark} 
    \numberwithin{equation}{section}
\DeclareFontFamily{U}{wncy}{}
\DeclareFontShape{U}{wncy}{m}{n}{<->wncyr10}{}
\DeclareSymbolFont{mcy}{U}{wncy}{m}{n}
\begin{document}

\title[Birch-Tate formula and $K_2\CO_F$ for real quadratic number fields $F$]{An application of Birch-Tate formula to tame kernels of real quadratic number fields}
\author{Li-Tong Deng and Yong-Xiong Li}

\subjclass[2010]{11R70 (primary),11M06 11R42 (secondary)}
\keywords{Birch-Tate formula, Tame kernels, Dirichlet L-functions}

\begin{abstract}
         Let $F$ be a real quadratic number field with discriminant $D$ and $\CO_F$ the ring of integers in $F$. Let $\chi_F$ be the Dirichlet character associated to $F/\BQ$.
         Write $L(\chi_F,s)$ for the Dirichlet L-function of $\chi_F$.
         By an induction argument for imprimitive Dirichlet L-values, we get several $2$-divisibility results on $L(\chi_F,-1)$ when $D$ has arbitrarily finitely many prime divisors. As an application, by making use of the Birch-Tate formula for $F$, we determine the $2$-primary part for the second $K$ group $K_2\CO_F$. We also give a new proof for an old theorem of Browkin and Schinzel.
\end{abstract}

\maketitle

\selectlanguage{english}

\section{Introduction}

Let $F$ be a real quadratic number field with the ring of integers $\CO_F$.
For a ring $R$, let $K_2 R$ be the Milnor group of $R$. The kernel of the surjective homomorphism
\[K_2 F\to \bigoplus_\mathfrak{p} (\CO_F/\mathfrak{p})^\times,\]
given by the tame symbols at all finite primes $\mathfrak{p}$ of $F$, is called the tame kernel of $F$ and is known to be isomorphic to $K_2\CO_F$.
A theorem of Garland \cite{garland} shows that $K_2\CO_F$ is a finite abelian group. For any prime $\ell$, we write $K_2\CO_F(\ell)$ the $\ell$-primary subgroup of $K_2\CO_F$. In this article, we shall determine an explicit structure for $K_2\CO_F(2)$ for several families of real quadratic number fields $F$ where the discriminant of $F$ can have arbitrarily finitely many prime factors.

\medskip

For any odd prime $\ell$, we denote by $\left(\frac{\cdot}{\ell}\right)$ the Legendre symbol.
The main result in this article is given in the following theorem.

\bigskip

\begin{thm}\label{main-thm-1}
Let $F=\BQ(\sqrt{D})$ be a real quadratic number field with discriminant $D=4\ell_1\cdots\ell_n$ where $\ell_i$ are distinct primes such that $\ell_1\equiv 3\mod 8$ and $\ell_i\equiv 5\mod 8$ for $2\leq i\leq n$. Assume that $\left(\frac{\ell_i}{\ell_j}\right)=-1$ for $2\leq i<j\leq n$, $\left(\frac{\ell_1}{\ell_i}\right)=1$ for $i\geq3$, $\left(\frac{\ell_1}{\ell_2}\right)=-1$ and $n$ is even. Then
\[K_2\CO_F(2)\simeq (\BZ/2\BZ)^{n-1}\times (\BZ/2^\delta\BZ)\]
where $\delta$ is an integer at least $3$.
\end{thm}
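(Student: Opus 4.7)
The plan is to combine the Birch--Tate formula with the 2-divisibility results for imprimitive Dirichlet $L$-values that (per the abstract) were established earlier in the paper, together with a 2-rank computation of the tame kernel.

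\textbf{Step 1 (order via Birch--Tate).} Since $F$ is totally real and abelian, Birch--Tate gives $\#K_2\CO_F = w_2(F)\,|\zeta_F(-1)|$. Factoring $\zeta_F(s)=\zeta(s)L(\chi_F,s)$ yields $\zeta_F(-1)=-\tfrac{1}{12}L(\chi_F,-1)$, hence
\[
v_2\bigl(\#K_2\CO_F(2)\bigr)=v_2(w_2(F))-2+v_2(L(\chi_F,-1)).
\]
I would first compute $v_2(w_2(F))$ directly from the definition of $w_2(F)$ as the largest $m$ with $\Gal(F(\mu_m)/F)$ killed by $2$. Under the congruences $\ell_1\equiv 3$, $\ell_i\equiv 5\pmod 8$, only the primes $2$ and $3$ contribute, and the computation is routine.

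\textbf{Step 2 ($L$-value divisibility).} I would invoke the 2-divisibility results for $L(\chi_F,-1)$ proved earlier via the induction on imprimitive Dirichlet $L$-values. The Legendre-symbol hypotheses $(\tfrac{\ell_i}{\ell_j})=-1$ for $2\leq i<j$, $(\tfrac{\ell_1}{\ell_i})=1$ for $i\geq 3$, $(\tfrac{\ell_1}{\ell_2})=-1$ and the parity condition on $n$ are tailored to make this induction yield a \emph{sharp} lower bound on $v_2(L(\chi_F,-1))$ sufficient to force $v_2(\#K_2\CO_F(2))\geq (n-1)+3$.

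\textbf{Step 3 (2-rank).} I would apply the Browkin--Schinzel / Tate 2-rank formula for tame kernels of real quadratic fields: for $D=4\ell_1\cdots\ell_n$ with the $\ell_i$ all odd, the $\BF_2$-dimension of $K_2\CO_F/2$ is controlled by the number $n+1$ of prime divisors of $D$ together with the quadratic-residue relations among the $\ell_i$ and their residues modulo $8$. Under the stated hypotheses I expect to obtain $\dims K_2\CO_F(2)/2 = n$.

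\textbf{Step 4 (assembly).} A finite abelian $2$-group of $2$-rank exactly $n$ has the form $\prod_{i=1}^{n}\BZ/2^{a_i}\BZ$ with $1\leq a_1\leq\cdots\leq a_n$. Combining Steps~1--2 (total 2-adic valuation $\geq (n-1)+3$) with Step~3 (2-rank $=n$) forces $a_1=\cdots=a_{n-1}=1$ and $a_n=\delta\geq 3$, which is exactly the asserted structure.

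\textbf{Main obstacles.} The genuinely delicate step is Step~2: the quadratic-residue and mod-$8$ hypotheses are precisely what the induction on imprimitive $L$-values needs in order to promote the generic $2$-divisibility of $L(\chi_F,-1)$ to the extra factor of $2^3$ that produces $\delta\geq 3$. A secondary subtlety is Step~3, since the 2-rank formula for $K_2\CO_F$ can drop when certain Rédei-type symbols vanish; I would need to verify that the Legendre-symbol conditions above fix the rank at the expected value $n$ rather than dropping it, and conversely that nothing in the hypotheses inflates the order beyond what Steps~1--2 predict.
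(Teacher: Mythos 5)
Your Steps 1--3 follow the paper's route: Birch--Tate together with $\zeta_F(s)=\zeta(s)L(\chi_F,s)$ gives $v_2(\#K_2\CO_F)=1+v_2(L(\chi_F,-1))$ (Corollary \ref{k2-zeta}); the induction on imprimitive $L$-values --- specifically case (2c) of Theorem \ref{mod-8-3}, whose hypotheses are exactly your Legendre-symbol and parity conditions, and whose proof also needs the Browkin--Schinzel input for $n=2$ --- gives $v_2(L(\chi_F,-1))\geq n+1$; and the $2$-rank comes out to $n$ via $r_2(K_2\CO_F)=1+r_2(\mathcal{C}_{F,S})+s$ with $s=1$ and $r_2(\mathcal{C}_{F,S})=n-2$. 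But your Step 4 has a genuine gap: a finite abelian $2$-group of $2$-rank exactly $n$ and order at least $2^{n+2}$ is \emph{not} forced to be $(\BZ/2\BZ)^{n-1}\times(\BZ/2^{\delta}\BZ)$. For instance $(\BZ/2\BZ)^{n-2}\times(\BZ/4\BZ)^{2}$ has $2$-rank $n$ and order $2^{n+2}$, and groups such as $(\BZ/2\BZ)^{n-2}\times(\BZ/4\BZ)\times(\BZ/2^{k}\BZ)$ are compatible with any larger order. Rank plus order alone cannot bound the number of cyclic components of order $\geq 4$ by one.

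The missing ingredient is an upper bound on the $4$-rank, and this is where the paper does genuinely additional work that your proposal never touches: by the Yue--Feng comparison (Proposition \ref{24-k2}(2)) one has $\left|r_4(K_2\CO_F)-r_4(\mathcal{C}_E)\right|\leq 1$ for the \emph{imaginary} quadratic field $E=\BQ(\sqrt{-D})$, and a R\'edei-matrix computation for $E$ shows $r(R_E)=n-1$, hence $r_4(\mathcal{C}_E)=0$ by Proposition \ref{24-pic}(3), whence $r_4(K_2\CO_F)\leq 1$. Only with this bound does the assembly work: $r_2=n$, $r_4\leq 1$ and $v_2(\#K_2\CO_F)\geq n+2$ force exactly one component of order $2^{\delta}$ with $(n-1)+\delta\geq n+2$, i.e.\ $\delta\geq 3$. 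Your ``Main obstacles'' paragraph worries about the $2$-rank dropping but is silent on the $4$-rank; note also that even your Step 3 implicitly needs a R\'edei computation for $F$ itself, since the paper extracts $r_2(\mathcal{C}_F)=n-1$ from Proposition \ref{24-pic}(2), whose hypothesis $r_4(\mathcal{C}_F)=0$ is verified by showing $r(R_F)=n$.
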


\bigskip

We remark that for $n=2$, the above theorem is due to Qin \cite{Q3} (see also \cite{BS} for a lower bound). So the theorem can be viewed as a generalization of Qin's result to discriminant of $F$ with arbitrarily finitely many prime factors. Some numerical data (included in section \S\ref{k2-str}) suggest that $\delta$ is always $3$. However, we currently see no way of proving it theoretically.

\medskip

For a finite abelian group $\mathfrak{A}$ and any integer $n\geq0$, we write $r_{2^n}(\mathfrak{A})$ for the number of cyclic components of $\mathfrak{A}$ of order divisible by $2^n$. The number $r_{2^n}(\mathfrak{A})$ is called the $2^n$-rank of $\mathfrak{A}$. Therefore Theorem \ref{main-thm-1}
shows that $r_8(K_2\CO_F)$ is always one. As far as we know, it is difficult to determine the $8$-rank for $K_2\CO_F$. In fact, our proof for Theorem \ref{main-thm-1} can not recover the result of Qin \cite{Q3} for $n=2$, in fact, the proof does depend on the result of Browkin and Schinzel \cite{BS} showing that $r_8(K_2\CO_F)\geq1$ when $n=2$. In addition to Theorem \ref{main-thm-1} for $r_8(K_2\CO_F)=1$,
we also  get the following theorem for $r_8(K_2\CO_F)=0$, in which we determine the $4$-rank for $K_2\CO_F$.

\bigskip

\begin{thm}\label{main-thm-2}
Let $F=\BQ(\sqrt{D})$ with discriminant $D>0$.
\begin{enumerate}
  \item Assume that $D=\ell_1\cdots \ell_n$ where $\ell_i$ are distinct primes such that $\ell_i\equiv 5\mod 8$ and $\left(\frac{\ell_i}{\ell_j}\right)=-1$ for $1\leq i<j\leq n$. Assume further that $n$ is odd. Then
\[K_2\CO_F(2)\simeq (\BZ/2\BZ)^{n+1}.\]
  \item Assume that $D=4\ell_1\cdots \ell_n$ satisfying the prime divisors $\ell_1\equiv 3\mod8$ and $\ell_i\equiv 5\mod 8\,  (2\leq i\leq n)$. Assume that $n$ is odd and $\left(\frac{\ell_i}{\ell_j}\right)=-1$ for $1\leq i<j\leq n$. Then
$K_2\CO_F(2)$ is isomorphic to $(\BZ/2\BZ)^{n-1}\times (\BZ/4\BZ)$ (resp. $(\BZ/2\BZ)^2$) if $n\geq3$ (resp. $n=1$).
\end{enumerate}
\end{thm}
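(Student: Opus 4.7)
The plan is to combine three ingredients: the Birch--Tate formula, the $2$-adic valuation of $L(\chi_F,-1)$ produced by the induction on imprimitive Dirichlet L-values developed in the earlier sections, and the classical genus-theoretic formula for the $2$-rank of $K_2\CO_F$. First, Birch--Tate gives $|K_2\CO_F| = w_2(F)\cdot|\zeta_F(-1)|$, and since $v_2(w_2(F))=3$ for any real quadratic $F$ (the exceptional prime $5$ in the case $F=\BQ(\sqrt{5})$ only contributes an odd factor) and $\zeta_F(-1)=-\tfrac{1}{12}L(\chi_F,-1)$, we obtain
\[
v_2\bigl(|K_2\CO_F|\bigr) \;=\; 1 + v_2\bigl(L(\chi_F,-1)\bigr).
\]
This reduces the theorem to computing the $2$-adic valuation of $L(\chi_F,-1)$ and the $2$-rank of $K_2\CO_F$.

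For the L-value, I would invoke the induction formulas on the number of prime divisors of $D$ that the paper has already established. The hypotheses $\ell_i\equiv 5\pmod 8$ (and $\ell_1\equiv 3\pmod 8$ in part (2)), together with the Legendre-symbol assumptions $\left(\frac{\ell_i}{\ell_j}\right)=-1$, are exactly what is needed to carry the induction from $n-1$ to $n$ primes while tracking the $2$-valuation; the parity of $n$ then fixes the sign patterns at each inductive step. The expected output is $v_2\bigl(L(\chi_F,-1)\bigr)=n$ in part (1) and in part (2) for $n\geq 3$, and $v_2\bigl(L(\chi_F,-1)\bigr)=1$ in part (2) for $n=1$. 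Substituting into the displayed relation gives $|K_2\CO_F(2)|=2^{n+1}$ in the generic cases and $|K_2\CO_F(2)|=4$ in the boundary case.

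To recover the full group structure, I would apply the classical genus-theoretic $2$-rank formula for $K_2\CO_F$ (Browkin--Schinzel, Tate, Hurrelbrink). Under the hypotheses of part (1) this yields $r_2(K_2\CO_F)=n+1$, which equals $\log_2$ of the order and therefore forces $K_2\CO_F(2)\simeq(\BZ/2\BZ)^{n+1}$. Under the hypotheses of part (2) one obtains $r_2(K_2\CO_F)=n$; for $n\geq 3$, rank $n$ combined with order $2^{n+1}$ forces exactly one cyclic summand of order $4$ and $n-1$ summands of order $2$ by an elementary count on abelian $2$-groups, while for $n=1$, rank $2$ and order $4$ force $(\BZ/2\BZ)^2$.

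The main obstacle is the inductive computation of $v_2\bigl(L(\chi_F,-1)\bigr)$: the interplay of the mod-$8$ residues, the Legendre-symbol hypotheses, and the parity of $n$ must be tracked carefully through each stage of the induction, and the boundary case $n=1$ in part (2) requires a separate check. Unlike Theorem~\ref{main-thm-1}, no nontrivial $8$-rank appears here, so one may conclude the $2$-primary structure entirely from the order and the $2$-rank, bypassing the Browkin--Schinzel $8$-rank lower bound needed for that theorem.
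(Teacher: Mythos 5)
Your proposal is correct, and its skeleton is exactly the paper's: the Birch--Tate relation $\#(K_2\CO_F)=-2L(\chi_F,-1)$ (Corollary \ref{k2-zeta}), the valuations $v(L(\chi_F,-1))=n$ from Proposition \ref{mod-8-5}(2) for part (1) and Theorem \ref{mod-8-3}(1) for part (2) (with Example \ref{ex-bs} handling $n=1$ there), and the $2$-rank values $n+1$ and $n$ obtained from genus theory. The one genuine divergence is the final step: the paper additionally proves $r_4(K_2\CO_F)\le 1$ by computing $r_4(\mathcal{C}_E)=0$ for $E=\BQ(\sqrt{-D})$ via a Redei-matrix rank computation and invoking the Yue--Feng inequality of Proposition \ref{24-k2}(2), whereas you observe that an abelian $2$-group of order $2^{n+1}$ and $2$-rank exactly $n$ (resp.\ $n+1$) is forced to be $(\BZ/2\BZ)^{n-1}\times(\BZ/4\BZ)$ (resp.\ elementary abelian), so that entire step is redundant. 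Your counting shortcut is valid and streamlines the proof of this theorem; what the paper's extra step buys is the template for Theorem \ref{main-thm-1}, where only the lower bound $v(L(\chi_D,-1))\ge n+1$ is available and the bound $r_4(K_2\CO_F)\le 1$ (together with the Browkin--Schinzel $8$-rank input of Lemma \ref{bs-lem}) is genuinely needed, as you yourself note. Two caveats on your write-up: the blanket claim $v_2(w_2(F))=3$ for all real quadratic $F$ fails at $D=8$ (where $w_2=48$), harmlessly, since that field is excluded by the hypotheses and you treat $D=5$ correctly; and the ``classical genus-theoretic $2$-rank formula'' is not a one-line citation as implemented in the paper --- in part (2) it rests on $-1\notin N(F^\times)$, on $r_4(\mathcal{C}_F)=0$ via the Redei matrix (needed to apply Proposition \ref{24-pic}(2)), and on the non-principality of the ramified prime above $2$ to get $r_2(\mathcal{C}_{F,S})=n-2$ --- so either these verifications or a direct appeal to Browkin--Schinzel's explicit $2$-rank theorem must be supplied to justify $r_2(K_2\CO_F)=n+1$ in (1) and $=n$ in (2).
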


For $n=1$, the above theorem is due to Browkin and Schinzel \cite[Corollary 1]{BS} (see also \cite{Q3}). Fortunately our proof for Theorem \ref{main-thm-2} recovers the case of Browkin and Schinzel. We also remark that the Theorem \ref{main-thm-2} is essentially used in the proof of Theorem \ref{main-thm-1}.

\medskip

Now, we say something about the main ideas in the proof for Theorems \ref{main-thm-1} and \ref{main-thm-2}. Previously, the $2^n$-rank $r_{2^n}(K_2\CO_F)$ was determined in an arithmetic way, i.e., using Hilbert symbols or solving certain Diophantine Equations in integers (see \cite{BS},\cite{Q1},\cite{Q2},\cite{Q3},\cite{YF}, \cite{CO} and \cite{Y1}). It may be possible that the method in \cite{Q3}, \cite{Q2} may show Theorems \ref{main-thm-1} and \ref{main-thm-2}. However, we stress that our method showing Theorems \ref{main-thm-1} and \ref{main-thm-2}
is analytic. The main tool is the Birch-Tate formula (see Theorem \ref{BT-formula}), which relates the order of $K_2\CO_F$ to $L(\chi_F,-1)$ (see Corollary \ref{k2-zeta}). Here, $\chi_F$ is the Dirichlet character associated to $F/\BQ$ and $L(\chi_F,s)$ is the Dirichlet L-functions associated to $\chi_F$. The Birch-Tate formula also shows that $L(\chi_F,-1)$ is a non-zero rational number. Thus if one can get a good estimate on the $2$-adic valuation for $L(\chi_F,-1)$, then one sees a rough picture of $K_2\CO_F(2)$. Although there are well-known special value formulas for $L(\chi_F,-1)$, it is very difficult to get enough information on the $2$-adic valuation for $L(\chi_F,-1)$ when the number of prime divisors of $D$ becomes large. To get around this difficulty, we use
an induction method for imprimitive Dirichlet L-functions. This method is inspired by Zhao who proved some cases for the Birch and Swinnerton-Dyer conjecture for elliptic curves with complex multiplication. By using this induction argument, we can get a more precise estimate on the $2$-adic valuation for $L(\chi_F,-1)$ when $D$ has an arbitrarily number of prime factors. Then by combining a well-known method for determining the $2$-rank and $4$-rank for ideal class groups and $K$ groups, we prove both Theorems. In the last step, we use a relation (due to Feng and Yue \cite{YF}) between the $4$-ranks of ideal class groups and $K$ groups.

\medskip

In the final part of the introduction, we briefly outline the contents of the paper. In next section \S\ref{btf}, we will state the Birch-Tate formula and give a precise relation between $K_2\CO_F$ and $L(\chi_F,-1)$. The main part of the paper is \S\ref{zhao-ch}, where we prove a special value formula for imprimitive Dirichlet L-functions. Then by using the induction argument, we get the required estimate on the $2$-adic valuation for the Dirichlet L-values (see Theorem \ref{mod-8-3}). In the final section \S\ref{k2-str}, we summarise the method to compute the $2$\, (resp. $4$)-rank of both the ideal class groups and $K$ groups. We then prove both theorems and give some numerical data in the last section.

\noindent\textbf{Acknowledgement}\quad We would like to thank Professors C. Greither, P. A. {\O}stv{\ae}r, H. Qin for helpful email correspondences, and thank Y. Kezuka for comments.

\section{Birch-Tate formula}\label{btf}

Let $\CK$ be a number field and $\CO_\CK$ the ring of integers in $\CK$. Let $\zeta_\CK(s)$ be the Dedekind zeta function of $\CK$. For any ring $R$, we let $K_2R$ denote the Milnor group of $R$ (equivalently, the second K group of $R$). By a theorem of Garland \cite{garland}, $K_2\CO_\CK$ is a finite abelian group.
In this section, for a real quadratic number field $F$,
we will state the Birch-Tate formula, which gives
a precise relation between the order of $K_2\CO_F$
and $\zeta_F(-1)$. Therefore, we can reduce the study of the structure of $K_2\CO_F$ to the zeta values.

\medskip

Let $\mathcal{F}$ be a totally real number field. The original Birch-Tate conjecture (see \cite{Tate69} for history and \cite{OCJM} for an update) predicts the following equality
\[\zeta_\mathcal{F}(-1)=\frac{\#(K_2(\CO_\mathcal{F}))}{\#(W_2(\mathcal{F}))},\]
where $W_2(\mathcal{F})=H^0(\mathcal{F},\BQ/\BZ(2))$. By Wiles's theorem on Iwasawa main conjecture (\cite{Wiles91}, see also \cite{greither} for abelian number fields) and a remark made by Kolster (see \cite{Weibel}), the above equality holds up to a power of $2$. Furthermore, if $\mathcal{F}$ is an abelian extension over $\BQ$,
the equality holds. Therefore, for a real quadratic number field $F$, we have the following Birch-Tate formula.

\begin{thm}\label{BT-formula}
Keep the above notations, then we have
\begin{equation}\label{BT-1}
  \zeta_F(-1)=\frac{\#(K_2(\CO_F))}{\#(W_2(F))}.
\end{equation}
\end{thm}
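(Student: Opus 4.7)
The plan is to deduce the equality \eqref{BT-1} by combining two complementary inputs, one handling the odd part and the other the $2$-primary part, and then invoking the abelian hypothesis for $F$.

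First, I would reduce the Birch-Tate conjecture to a statement at each rational prime $\ell$: since both sides of \eqref{BT-1} are known \emph{a priori} to be rational numbers (the left by Siegel--Klingen, the right by Garland's finiteness theorem cited above), it suffices to compare $\ell$-adic valuations for every prime $\ell$. For $\ell$ odd, I would appeal to Wiles's theorem on the Iwasawa main conjecture for totally real abelian extensions of $\BQ$, which, combined with the standard identification of $K_{2}(\CO_{\CF})_{(\ell)}$ with the \'etale cohomology group $H^{2}_{\et}(\Spec\CO_{\CF}[1/\ell],\BZ_{\ell}(2))$ (Quillen--Lichtenbaum, established in the relevant range), gives the desired $\ell$-part of the equality for any totally real abelian $\CF$, in particular for $F$.

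Next, I would address the prime $\ell=2$, which is the main obstacle. Here I would invoke Kolster's refinement: for a totally real abelian number field $\CF$, the $2$-adic main conjecture (in the form available for abelian extensions of $\BQ$, again via Wiles and subsequent work) together with the $2$-adic Quillen--Lichtenbaum theorem (due to Rognes--Weibel and Voevodsky) identifies the $2$-part of $\#K_{2}(\CO_{\CF})/\#W_{2}(\CF)$ with the $2$-part of $\zeta_{\CF}(-1)$. Since $F=\BQ(\sqrt{D})$ is abelian over $\BQ$, this completes the comparison at $2$.

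Combining the two cases, the rational numbers on the two sides of \eqref{BT-1} have equal $\ell$-adic valuations for every prime $\ell$ and are both positive (since $F$ is totally real and $\zeta_{F}(-1)>0$ by the functional equation), so they coincide. The hard part is genuinely the $2$-primary statement: this is where Wiles's main conjecture alone gives only equality up to a power of $2$, and it is the abelian hypothesis together with Kolster's remark that removes this ambiguity. Once this input is granted, the theorem as stated is immediate.
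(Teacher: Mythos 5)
Your argument is correct and takes essentially the same route as the paper's own (citation-based) proof: the odd $\ell$-parts via Wiles's Iwasawa main conjecture for totally real fields together with the identification of $K_2(\CO_F)\otimes\BZ_\ell$ with \'etale cohomology, and the $2$-part via Kolster's remark in the appendix to Rognes--Weibel, which applies because $F/\BQ$ is abelian. Your additional bookkeeping (rationality of both sides via Siegel--Klingen and Garland, and positivity) merely makes explicit the prime-by-prime comparison the paper leaves implicit.
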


\medskip

The term $w_2(F)=\#(W_2(F))$ is easy to compute, we list the result in the following lemma.

\begin{lem}\label{w2}
Write $F=\BQ(\sqrt{D})$ with $D$ the discriminant of $F$. Then
\begin{enumerate}
  \item [(1)]If $D=5$, $w_2(F)=120$.
  \item [(2)]If $D=8$, $w_2(F)=48$.
  \item [(3)]If $D>8$, $w_2(F)=24$.
\end{enumerate}
 \end{lem}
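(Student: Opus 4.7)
The plan is to interpret $w_2(F) = \#H^0(F,\BQ/\BZ(2))$ as the largest positive integer $N$ for which $\Gal(F(\mu_N)/F)$, viewed inside $(\BZ/N\BZ)^\times$ via the cyclotomic character, has exponent dividing $2$; this reflects the fact that $G_F$ acts on $\BQ/\BZ(2)$ through $\chi_{\cyc}^2$. Factoring $w_2(F) = \prod_\ell w_2(F)_\ell$ reduces the problem to a prime-by-prime analysis of this exponent condition.

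First I would settle the base case $F = \BQ$. Since $(\BZ/2^k\BZ)^\times$ has exponent dividing $2$ precisely for $k \leq 3$ (the obstruction being $3^2 \equiv 9 \not\equiv 1 \pmod{16}$), the $2$-part is $8$; since $(\BZ/3\BZ)^\times \simeq \BZ/2\BZ$ but $(\BZ/9\BZ)^\times$ is cyclic of order $6$, the $3$-part is $3$; and for any $\ell \geq 5$ the cyclic group $(\BZ/\ell\BZ)^\times$ already has order $\geq 4$, so the $\ell$-part is $1$. This gives $w_2(\BQ) = 24$, and via the restriction isomorphism $\Gal(F(\mu_{\ell^k})/F) \cong \Gal(\BQ(\mu_{\ell^k})/\BQ)$ it delivers case (3) whenever $F \cap \BQ(\mu_{\ell^\infty}) = \BQ$ at every prime $\ell$.

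Next I would pinpoint when a local factor can strictly exceed its $\BQ$-value: this requires $F \subseteq \BQ(\mu_{\ell^\infty})$, and among real quadratic fields this happens only for $F = \BQ(\sqrt{2})$ at $\ell = 2$ and for $F = \BQ(\sqrt{p})$ with odd prime $p \equiv 1 \pmod 4$ at $\ell = p$. In the odd case, the index-$2$ subgroup $\Gal(\BQ(\mu_p)/F) \subseteq (\BZ/p\BZ)^\times$ consists of the squares, a cyclic group of order $(p-1)/2$; this has exponent at most $2$ iff $p = 5$, and at the next cyclotomic level one gets a cyclic group of order $p(p-1)/2$ which is never exponent $2$ for $p$ odd. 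So among odd primes, only $F = \BQ(\sqrt{5})$ contributes an extra factor of $5$, yielding $w_2(\BQ(\sqrt{5})) = 8 \cdot 3 \cdot 5 = 120$ and settling case (1).

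The remaining case $F = \BQ(\sqrt{2})$ is where the main technical point lies. Writing $\sqrt{2} = \zeta_{16}^2 + \zeta_{16}^{-2}$ identifies $\Gal(\BQ(\mu_{16})/F) = \{a \in (\BZ/16\BZ)^\times : a \equiv \pm 1 \pmod 8\} = \{\pm 1, \pm 7\}$, an elementary abelian group of order $4$, so the $2$-part rises to $16$; climbing to $\BQ(\mu_{32})$ fails because the automorphism $\sigma_9$ still fixes $\sqrt{2}$ yet has order $4$ modulo $32$. Assembling the local contributions yields $w_2(\BQ(\sqrt{2})) = 16 \cdot 3 = 48$, settling case (2). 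The only real obstacle is this final $2$-adic computation: correctly pinning down the index-$2$ subgroup of $(\BZ/16\BZ)^\times$ fixing $\sqrt{2}$ and checking that its exponent is exactly $2$.
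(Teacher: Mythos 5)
Your proof is correct, but it takes a genuinely different route from the paper's. The paper simply quotes Hurrelbrink's closed formula $w_2(F)=4\cdot\prod_{[\mathfrak{L}:F]=2}\frac{1}{2}\#\mu(\mathfrak{L})$, the product running over the quadratic extensions $\mathfrak{L}/F$ (almost all factors being $1$), and then observes which roots of unity a quartic field over $\BQ$ can contain; the three cases drop out of a short count of $\mu(\mathfrak{L})$ for $\mathfrak{L}=F(i)$, $F(\zeta_3)$, and, in the exceptional cases $D=8$ and $D=5$, $\mathfrak{L}=\BQ(\mu_8)$ and $\BQ(\mu_5)$. You instead work directly from the definition $w_2(F)=\#H^0(F,\BQ/\BZ(2))$: since $G_F$ acts on $\BQ/\BZ(2)$ through $\chi_{\cyc}^2$, the $\ell$-part of $w_2(F)$ is the largest $\ell^k$ with $\Gal(F(\mu_{\ell^k})/F)$ of exponent dividing $2$, and you then run explicit cyclotomic Galois theory prime by prime. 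Your verifications all check out: the base value $w_2(\BQ)=8\cdot 3=24$; the fact that a local factor can only exceed its $\BQ$-value when $F\subseteq\BQ(\mu_{\ell^\infty})$ (restriction identifies $\Gal(F(\mu_{\ell^k})/F)$ with a subgroup of $\Gal(\BQ(\mu_{\ell^k})/\BQ)$, so factors can never drop either); the identification of the only exceptional real quadratic fields as $\BQ(\sqrt{2})$ at $\ell=2$ and $\BQ(\sqrt{p})$, $p\equiv 1\pmod 4$, at $\ell=p$; the cutoff $(p-1)/2\leq 2$ forcing $p=5$; and the $2$-adic computation that $\{\pm1,\pm7\}\subset(\BZ/16\BZ)^\times$ has exponent $2$ while $\sigma_9$, which fixes $\sqrt{2}$ since $9\equiv 1\pmod 8$, has order $4$ modulo $32$. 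What the paper's route buys is brevity, at the price of a black box (the cited formula from Hurrelbrink); what yours buys is a self-contained argument from the Galois-module definition that makes transparent \emph{why} $\BQ(\sqrt{2})$ and $\BQ(\sqrt{5})$ are the only exceptions, namely they are the only real quadratic fields lying in a cyclotomic $\ell$-tower. The two are equivalent in substance, as Hurrelbrink's product is a repackaging of the same exponent-$2$ condition.
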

\begin{proof}
This is a well-known results. We include a proof for readers' convenience. From \cite{w2}, we have the formula
\[w_2(F)=4\cdot\prod_{[\mathfrak{L}:F]=2}\frac{1}{2}\cdot\#(\mu(\mathfrak{L}))\]
where $\mathfrak{L}/F$ runs over all quadratic extensions of $F$, and $\mu(\mathfrak{L})$ is the group of roots of unity in $\mathfrak{L}$. By noting that a field extension over $\BQ$ with degree $4$ can only contain $4,5,6,8$-th roots of unity, the assertion in the lemma follows.
\end{proof}

When $F=\BQ(\sqrt{2})$ or $\BQ(\sqrt{5})$, one can easily determine the structure of $K_2\CO_F$ (at least for the $2$-primary part). From now on, we assume that $D>8$.

\begin{cor}\label{k2-zeta}
For $F=\BQ(\sqrt{D})$ satisfying $D>8$, we have
\[\#(K_2\CO_F)=24\cdot\zeta_F(-1).\]
In particular, if we denote by $\chi_F$ the Dirichlet character associated to $F/\BQ$, then
\[\#(K_2\CO_F)=-2\cdot L(\chi_F,-1).\]
Here, $L(\chi_F,s)$ is the Dirichlet L-series associated to $\chi_F$.
\end{cor}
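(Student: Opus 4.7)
The plan is to string together the three ingredients already prepared in this section. First I would invoke the Birch-Tate formula (Theorem \ref{BT-formula}) to rewrite
\[
\#(K_2\CO_F) = w_2(F)\cdot \zeta_F(-1).
\]
Under the hypothesis $D>8$, Lemma \ref{w2}(3) gives $w_2(F)=24$, which immediately yields the first displayed equality $\#(K_2\CO_F)=24\cdot\zeta_F(-1)$.

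Next, to pass from $\zeta_F(-1)$ to $L(\chi_F,-1)$, I would use the standard factorisation of the Dedekind zeta function of the abelian extension $F/\BQ$:
\[
\zeta_F(s)=\zeta(s)\cdot L(\chi_F,s),
\]
which is valid by class field theory (or directly by comparing Euler factors, since $\chi_F$ is precisely the quadratic character cut out by $F/\BQ$). Specialising at $s=-1$ and using the classical value $\zeta(-1)=-\tfrac{1}{12}$, I obtain
\[
\zeta_F(-1)=-\tfrac{1}{12}\,L(\chi_F,-1).
\]

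Combining this with the first formula gives $\#(K_2\CO_F)=24\cdot\bigl(-\tfrac{1}{12}\bigr)L(\chi_F,-1)=-2\,L(\chi_F,-1)$, which is the second assertion. There is essentially no obstacle here: every ingredient (Birch-Tate, the computation of $w_2(F)$, the factorisation of $\zeta_F$, and $\zeta(-1)=-1/12$) is already available, so the proof amounts to a one-line substitution. The only point worth flagging is that one should note $L(\chi_F,-1)$ is automatically a nonzero rational number (so that the right-hand side is genuinely a positive integer), which follows from the left-hand side being a positive integer, or equivalently from the functional equation for $L(\chi_F,s)$ together with the non-vanishing of $L(\chi_F,2)$.
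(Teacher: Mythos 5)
Your proposal is correct and follows exactly the paper's own route: Birch--Tate (Theorem \ref{BT-formula}) combined with $w_2(F)=24$ from Lemma \ref{w2}, then the factorisation $\zeta_F(s)=\zeta(s)L(\chi_F,s)$ and $\zeta(-1)=-\tfrac{1}{12}$. Your closing remark on the rationality and non-vanishing of $L(\chi_F,-1)$ is a harmless extra observation not needed for the statement itself.
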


\begin{proof}
Denote by $\zeta(s)$ the Riemann's zeta function.
The corollary follows by the well-known formula
$\zeta_F(s)=\zeta(s)L(s,\chi_F)$
and $\zeta(-1)=-\frac{1}{12}$.
\end{proof}

\section{A special value formula and induction method}\label{zhao-ch}

We know from section \S\ref{btf} that one can reduce the study of $K_2\CO_F$ to $L(\chi_F,-1)$.
It is also well-known that $L(\chi_F,-1)$ is a nonzero rational number
and there is a special value formula for $L(\chi_F,-1)$ in terms of the generalized Bernoulli numbers.
However, for any prime $\ell$, to determine the precise $\ell$-adic valuation of $L(\chi_F,-1)$ is very difficult (even for $\ell=2$). To overcome this difficulty, we borrow some ideas of Zhao, who proved some cases for the Birch and Swinnerton-Dyer conjecture for elliptic curves with complex multiplication (see \cite{Zhao}). The ideas consist of two main ingredients: one is a special value formula for imprimitive L-values of Dirichlet characters at $s=-1$; the other is an induction method for these imprimitive L-functions. By using Zhao's method, we can get a more precise estimate on the $2$-adic valuation of $L(\chi_F,-1)$ when $D$ has arbitrarily finitely many prime factors. This key result will help us to determine the $2$-primary part of $K_2\CO_F$ in the next section.

\medskip

Let us first recall some basic facts on Dirichlet L-functions and generalized Bernoulli numbers. Let $\chi$ be a Dirichlet character with conductor $f_\chi=f$. Let $g$ be any positive integer such that $f$ divides $g$. We define $L^{(g)}(\chi,s)(s\in\BC)$ to be the analytic continuation of the following power series
\[\sum^\infty_{n=1,\,  (n,g)=1}\frac{\chi(n)}{n^s},\qquad \textrm{where } \Re(s)>1.\]
This is an imprimitive Dirichlet L-series when $f\neq g$. When $f=g$, $L^{(g)}(\chi,s)$ is a primitive L-series and we simply denote it by $L(\chi,s)$.

We define the generalized Bernoulli number $B_{n,\chi,g}$ via the following identity
\begin{equation}\label{bn-g}
  \sum^g_{z=1}\frac{\chi(z)te^{zt}}{e^{gt}-1}=\sum^\infty_{n=0}B_{n,\chi,g}\frac{t^n}{n!}.
\end{equation}
For each integer $n\geq0$, we recall the definition of the Bernoulli polynomial $B_n(X)$ in the following equality
\begin{equation}\label{b-poly}
\frac{te^{Xt}}{e^t-1}=\sum^\infty_{n=0}B_n(X)\frac{t^n}{n!}.
\end{equation}
We remark that the dependence of $B_{n,\chi,g}$ on $g$ in \eqref{bn-g} can be ignored by the following lemma.

\begin{lem}
For any integer $g$ such that $f\mid g$, the number $B_{n,\chi,g}$ is independent of $g$. We denote this number by $B_{n,\chi}$. Then
\[B_{n,\chi}=g^{n-1}\sum^g_{a=1}\chi(a)\cdot B_n\left(\frac{a}{g}\right).\]
Furthermore, if $\chi$ is non-trivial and $\chi(-1)=1$, we have
\begin{equation}\label{b2-chi}
  B_{2,\chi}=\frac{1}{g}\sum^g_{a=1}\chi(a)a^2.
\end{equation}
\end{lem}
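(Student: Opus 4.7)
The plan is to work directly from the generating function identity \eqref{bn-g} and the Bernoulli polynomial generating function \eqref{b-poly} to derive the closed-form expression, then use standard symmetry properties of Bernoulli polynomials to obtain independence of $g$.

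First I would perform the change of variables $u = gt$ in \eqref{bn-g}. The left-hand side becomes
\[
\sum_{z=1}^g \chi(z)\,\frac{(u/g)\, e^{(z/g)u}}{e^u - 1} \;=\; \frac{1}{g}\sum_{z=1}^g \chi(z) \sum_{n=0}^\infty B_n\!\left(\frac{z}{g}\right)\frac{u^n}{n!}
\]
by \eqref{b-poly}, while the right-hand side becomes $\sum_n B_{n,\chi,g}\, u^n/(g^n\, n!)$. Equating coefficients of $u^n$ gives
\[
B_{n,\chi,g} \;=\; g^{n-1}\sum_{a=1}^g \chi(a)\, B_n\!\left(\frac{a}{g}\right).
\]

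Next I would establish that this expression depends only on $\chi$ (equivalently, only on the conductor $f$). Writing $g = mf$ and partitioning the sum as $a = b + kf$ with $1 \leq b \leq f$ and $0 \leq k \leq m-1$, and using $\chi(a) = \chi(b)$, the crucial input is the classical distribution relation
\[
\sum_{k=0}^{m-1} B_n\!\left(\frac{X+k}{m}\right) \;=\; m^{\,1-n}\, B_n(X),
\]
applied at $X = b/f$. After multiplying by $g^{n-1} = m^{n-1} f^{n-1}$, the result for $g$ collapses to the result for $f$, proving independence; I would then adopt the notation $B_{n,\chi}$. This application of the distribution relation is the main nontrivial ingredient; everything else is a coefficient comparison.

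Finally, for the special case $n=2$ with $\chi$ non-trivial and even ($\chi(-1)=1$), I would substitute $B_2(X) = X^2 - X + 1/6$ to obtain
\[
B_{2,\chi} \;=\; \frac{1}{g}\sum_{a=1}^g \chi(a)\, a^2 \;-\; \sum_{a=1}^g \chi(a)\, a \;+\; \frac{g}{6}\sum_{a=1}^g \chi(a).
\]
Since $\chi$ is non-trivial modulo $g$ (as $f \mid g$ and $\chi$ is non-trivial modulo $f$), the third sum vanishes; and the substitution $a \mapsto g-a$ together with $\chi(-1)=1$ shows that $2\sum_a \chi(a)\,a = g\sum_a \chi(a) = 0$, killing the second sum. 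The formula \eqref{b2-chi} then drops out immediately.
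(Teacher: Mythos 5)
Your proof is correct, but you route the independence claim differently from the paper. The paper proves independence \emph{first}, entirely at the level of generating functions: writing $z=b+cf$ with $1\leq b\leq f$, $0\leq c\leq y-1$, it collapses the inner sum via the geometric identity $\sum_{c=0}^{y-1}e^{tcf}=(e^{tg}-1)/(e^{tf}-1)$, so that $\sum_{z=1}^g\chi(z)\,te^{zt}/(e^{gt}-1)=\sum_{b=1}^f\chi(b)\,te^{bt}/(e^{ft}-1)$ and all the coefficients $B_{n,\chi,g}$ are seen to be independent of $g$ at one stroke; only afterwards does it extract the closed formula by the same coefficient comparison (substituting $X=a/g$ and rescaling $t$ in \eqref{b-poly}) that you carry out. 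You reverse the order: you derive $B_{n,\chi,g}=g^{n-1}\sum_a\chi(a)B_n(a/g)$ first, and then get independence term-by-term from the classical multiplication (distribution) relation $\sum_{k=0}^{m-1}B_n\bigl((X+k)/m\bigr)=m^{1-n}B_n(X)$ at $X=b/f$ — your bookkeeping is right, since $g^{n-1}m^{1-n}=f^{n-1}$, and the partition $a=b+kf$ with $\chi(a)=\chi(b)$ is legitimate because the primitive character is $f$-periodic. The two arguments are essentially dual: the distribution relation is itself standardly proved by exactly the geometric-sum manipulation the paper uses, so your version trades a short self-contained computation for an appeal to a quotable classical fact, while the paper's version buys self-containedness and needs no prior closed formula. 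Your treatment of \eqref{b2-chi} coincides with the paper's and is in fact more explicit: the paper only remarks that non-triviality and $\chi(-1)=1$ are used, whereas you spell out that $\sum_{a=1}^g\chi(a)=(g/f)\sum_{b=1}^f\chi(b)=0$ and that the substitution $a\mapsto g-a$ (harmless at the endpoints since $\chi(g)=\chi(0)=0$ for non-trivial $\chi$) yields $2\sum_a\chi(a)a=g\sum_a\chi(a)=0$.
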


\begin{proof}
Set $y=\frac{g}{f}$ and make a change of variables
\[z=b+cf,\qquad 1\leq b\leq f,\, 0\leq c\leq y-1.\]
Noting that $\chi$ has conductor $f$, we have
\begin{equation}\label{3-1}
  \sum^g_{z=1}\chi(z)\cdot\frac{te^{zt}}{e^{gt}-1}=\sum^f_{b=1}\sum^{y-1}_{c=0}\chi(b)\cdot\frac{te^{t(b+cf)}}{e^{gt}-1}.
\end{equation}
Since the following identity
\[\sum^{y-1}_{c=0}e^{tcf}=\frac{e^{tg}-1}{e^{tf}-1},\]
the right hand side of \eqref{3-1} turns to be
\[\sum^f_{b=1}\sum^{y-1}_{c=0}\chi(b)\cdot\frac{te^{t(b+cf)}}{e^{gt}-1}=\sum^f_{b=1}\chi(b)\cdot\frac{te^{bt}}{e^{ft}-1}.\]
Therefore, we get
\[ \sum^g_{z=1}\chi(z)\cdot\frac{te^{zt}}{e^{gt}-1}=\sum^f_{b=1}\chi(b)\cdot\frac{te^{bt}}{e^{ft}-1}\]
and the first assertion follows. Next, by formula \eqref{b-poly}, we get
\[\sum^\infty_{n=0}g^{n-1}\sum^g_{a=1}\chi(a)B_n\left(\frac{a}{g}\right)\cdot\frac{t^n}{n!}=\sum^g_{a=1}\chi(a)\frac{te^{at}}{e^{gt}-1}.\]
Comparing to \eqref{bn-g}, the first equality in the lemma follows. Finally, note that $B_2(X)=X^2-X+\frac{1}{6}$, then formula \eqref{b2-chi} follows from the first equality. Here, we use the fact that $\chi$ is non-trivial and $\chi(-1)=1$. Therefore, the lemma follows.
\end{proof}

For any integer $n\geq1$,
the following special value formula for $L(\chi,s)$ at $s=1-n$ is well-known
\begin{equation}\label{3-2}
  L(\chi,1-n)=-\frac{B_{n,\chi}}{n}.
\end{equation}
One can find a detailed proof in \cite[Theorem 4.2]{Washington}.

\medskip

Next, we give a special value formula for imprimitive Dirichlet L-functions. For this purpose, we introduce some notation. For a positive integer $M\equiv 0$ or $1\mod 4$, we put $\chi_M=\left(\frac{M}{\cdot}\right)$, where $\left(\frac{M}{\cdot}\right)$ is the Jacobian symbol. Here, we make the convention that $\left(\frac{M}{-1}\right)=1$ since $M>0$ (see also \cite{shimura71}). Let $d_i(1\leq i\leq n)$ be discriminant of some real quadratic fields such that the greatest common divisor of $d_i,d_j (i\neq j)$ is $1$ or $4$.
For any $d=d_{i_1}\cdots d_{i_s}$ where $\{i_1,\cdots,i_s\}$ is a non-empty subset of $\{1,\cdots,n\}$, we let $d'=D/d$.

\begin{prop}\label{im-s-f}
Let $L^{(D)}(\chi_d,s)$ be the imprimitive Dirichlet L-function associated to $\chi_d$. Then
\begin{equation}\label{im-f}
  L^{(D)}(\chi_d,-1)=-\frac{1}{2D}\sum^{D}_{a=1}\chi_{Dd'}(a)a^2.
\end{equation}
\end{prop}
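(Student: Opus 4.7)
The plan is to reduce $L^{(D)}(\chi_d,-1)$ to the standard special value formula \eqref{3-2} at $n=2$ applied to $\chi_{Dd'}$, viewed as a Dirichlet character of modulus $D$. The first step is to recognize $\chi_{Dd'}$ as the character modulo $D$ induced from the primitive character $\chi_d$: since $Dd' = d(d')^2$, the Jacobi symbol factors as $(Dd'/a) = (d/a)\,(d'/a)^2$, and $(d'/a)^2$ equals $1$ when $\gcd(a,d')=1$ and $0$ otherwise. Combined with the vanishing of $\chi_d$ on integers sharing a factor with $d$, this yields $\chi_{Dd'}(a) = \chi_d(a)$ if $\gcd(a,D)=1$ and $0$ otherwise, so that comparing Dirichlet series in the region of absolute convergence and continuing analytically gives
\[L^{(D)}(\chi_d,s) \;=\; L(\chi_{Dd'},s).\]

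Next I would apply \eqref{3-2} with $n = 2$ to obtain $L^{(D)}(\chi_d,-1) = -B_{2,\chi_{Dd'}}/2$ and evaluate $B_{2,\chi_{Dd'}}$ by mimicking the proof of the preceding lemma with the generating function \eqref{bn-g} taken at $g = D$. Expanding the Bernoulli polynomial $B_2(X) = X^2 - X + \tfrac{1}{6}$ yields
\[B_{2,\chi_{Dd'}} \;=\; \frac{1}{D}\sum_{a=1}^D \chi_{Dd'}(a)\,a^2 \;-\; \sum_{a=1}^D \chi_{Dd'}(a)\,a \;+\; \frac{D}{6}\sum_{a=1}^D \chi_{Dd'}(a).\]
The last two sums vanish: $\chi_{Dd'}$ is a nontrivial Dirichlet character modulo $D$, so $\sum_a \chi_{Dd'}(a) = 0$; and it is even, since each factor $\chi_{d_i}$ is attached to a positive fundamental discriminant and hence satisfies $\chi_{d_i}(-1) = 1$, which by the pairing $a \leftrightarrow D - a$ forces $\sum_a \chi_{Dd'}(a)\,a = 0$. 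Only the quadratic term survives, producing the identity \eqref{im-f}.

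The main point requiring care is that the formula $B_{n,\chi} = g^{n-1}\sum_a \chi(a) B_n(a/g)$ of the preceding lemma was established for primitive $\chi$, whereas I apply it to the imprimitive character $\chi_{Dd'}$ with $g = D$. The generating-function manipulation carries over verbatim, but to bypass this mild extension one can instead invoke the Euler product $L^{(D)}(\chi_d,s) = L(\chi_d,s)\prod_{p \mid d'}(1 - \chi_d(p) p^{-s})$: evaluating at $s = -1$ with the lemma applied to $L(\chi_d,-1)$, and matching against \eqref{im-f} by M\"obius-inverting the condition $\gcd(a,d')=1$ together with the consequence $\sum_{b=1}^{Nd}\chi_d(b)b^2 = N\sum_{b=1}^d \chi_d(b)b^2$ of the lemma applied with $g = Nd$, reassembles into the desired product over primes $p \mid d'$ by multiplicativity.
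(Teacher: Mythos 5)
Your proposal is correct and follows essentially the same route as the paper: both identify $L^{(D)}(\chi_d,s)$ with $L(\chi_{Dd'},s)$ (you by comparing Dirichlet-series coefficients via $\chi_{Dd'}(a)=\chi_d(a)$ for $\gcd(a,D)=1$ and $0$ otherwise, the paper via Euler products) and then evaluate at $s=-1$ through \eqref{3-2} and \eqref{b2-chi} with $g=D$. Your extra care in applying the Bernoulli-number lemma to the imprimitive character is in fact well placed: the paper glosses over this point by asserting that the conductor of $\chi_{Dd'}$ is $D$ (that is really its modulus, the conductor being $d$), and your observation that the generating-function derivation of $B_{2,\chi}=\frac{1}{g}\sum_{a=1}^{g}\chi(a)a^2$ never uses primitivity, together with your M\"obius-inversion fallback, supplies exactly the justification the paper leaves implicit.
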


\begin{proof}
By the Euler product of $L^{(D)}(\chi_d,s)$ and that $\chi_d(\ell)=\left(\frac{d}{\ell}\right)$ for any prime $\ell\nmid d$, we get
\[L^{(D)}(\chi_d,s)=\prod_{\ell\nmid d'}\left(1-\left(\frac{d}{\ell}\right)\ell^{-s}\right)^{-1},\qquad \textrm{where}\quad \Re(s)>1.\]
Since for these $\ell\nmid d'$, we always have
$\chi_{Dd'}(\ell)=\chi_d(\ell)$. Therefore, the above equality implies that
\[L^{(D)}(\chi_d,s)=L(\chi_{Dd'},s).\]
Here, we should note that the conductor of $\chi_{Dd'}$ is $D$. Now, our formula \eqref{im-f} follows from the above equality, formulas \eqref{3-2} and \eqref{b2-chi}. Therefore, the Proposition follows.
\end{proof}

The next example are the very few cases where we can get a precise $2$-adic valuation for $L(\chi_F,-1)$ only using formula \eqref{im-f}.
We denote by $v=v_2$ the normalized additive valuation on $\BQ_2$ such that $v(2)=1$.

\begin{exm}\label{ex-bs}
Let $p\equiv 3\mod 8$ be a prime and $d=4p$. For $F=\BQ(\sqrt{d})$, we have
\[v(L(\chi_d,-1))=1.\]
Our motivation for this example comes from some arithmetic results by Browkin-Schinzel in \cite{BS}.
Of course, the discriminant $D_F=d$ of $F$ is greater than $8$, thus, from Birch-Tate formula, we know that $24\zeta_F(-1)\in\BZ$. By $\zeta(-1)=\frac{1}{12}$, formula \eqref{im-f} and
\[\zeta_F(-1)=\zeta(-1)L(\chi_d,-1),\]
we see that $\frac{1}{d}\sum^d_{a=1}\chi_d(a)a^2$ is an integer. By $\chi_d$ takes zero value on even integers and recall that $d=4p$, we get
\begin{equation}\label{3-3}
  \frac{1}{d}\sum^d_{a=1}\chi(a)a^2=\frac{1}{4p}\sum^{2p}_{b=1}\chi_d(2b-1)\cdot (2b-1)^2.
\end{equation}
Since $\chi_d$ is nontrivial and $\chi_d(-1)=1$, one can easily check that
\[\sum^{2p}_{b=1}\chi_d(2b-1)=\sum^{2p}_{b=1}\chi_d(2b-1)b=0.\]
Thus, from formula \eqref{3-3}, we get
\begin{equation}\label{3-4}
  \frac{1}{d}\sum^d_{a=1}\chi(a)a^2=\frac{1}{p}\sum^{2p}_{b=1}\chi_d(2b-1)\cdot b^2.
\end{equation}
Since $\chi_d(2b-1)=\left(\frac{p}{2b-1}\right)$, if we break up the second sum in \eqref{3-4} according as $b=2c$ or $b=2c-1$ ($1\leq c\leq p$), by modulo $8$ and noting that
\[(2c-1)^2\equiv 1\mod 8 \quad\textrm{and }\quad\sum^p_{c=1}\left(\frac{p}{4c-3}\right)=0,\]
then we get
\[ \frac{1}{d}\sum^d_{a=1}\chi(a)a^2\equiv \frac{1}{p}\sum^p_{c=1}\left(\frac{p}{4c-1}\right)4c^2\mod 8. \]
Now, the example will follow from the formula \eqref{im-f} and the following claim
\[\sum^p_{c=1}\left(\frac{p}{4c-1}\right)c^2\equiv 1\mod 2.\]
To show this claim, since the Jacobian symbol takes values in $\pm1$, thus, by modulo $2$, we have
\[\sum^p_{c=1}\left(\frac{p}{4c-1}\right)c^2\equiv \sum^p_{c=1}c^2 -\left(\frac{p+1}{4}\right)^2\mod 2.\]
Then one can easily check the right hand side in the congruence is indeed an odd integer, and our claim follows.
\end{exm}

\medskip

Recall that $D=d_1\cdots d_n$ with each $d_i$ is a discriminant of some real quadratic fields satisfying $(d_i,d_j)=1$ or $4$ for $i\neq j$. We remark that when $n$ is large, it is hardly to get a precise $2$-adic valuation of $L(\chi_D,-1)$, so now we introduce the induction method to get around this problem. Let us define
\[\mathcal{G}=\{a\in\BZ: 1\leq a\leq D,\, \chi_{d_i}(a)=1 \, \textrm{ for any } 1\leq i\leq n\}.\]
The heart of the induction method is to take sum of all the imprimitive L-values $L^{(D)}(\chi_d,-1)$, where $d=d_{i_1}\cdots d_{i_s}$ such that $\{i_1,\cdots,i_s\}$ runs over all non-empty subset of $\{1,\cdots,n\}$.

\begin{prop}\label{zhao1}
We have
\begin{equation}\label{zhao-f}
 \sum_{d=d_{i_1}\cdots d_{i_s}}L^{(D)}(\chi_{d},-1)=-\frac{2^{n-1}}{D}\sum_{1\leq a\leq D,\, a\in\CG}a^2+\frac{D}{6}\varphi(D)+\frac{1}{12}\prod_{p\mid D}(1-p).
\end{equation}
Here, the first sum is taken over all $d_{i_1}\cdots d_{i_s}$ such that $\{i_1,\cdots,i_s\}$ runs over all non-empty subset of $\{1,\cdots,n\}$. The last product is taken over all prime divisors $p$ of $D$. The function $\varphi(D)$ is the Euler function.
\end{prop}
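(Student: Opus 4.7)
The plan is to apply Proposition~\ref{im-s-f} to each imprimitive $L$-value, interchange the order of summation, and exploit the elementary subset-sum identity $\prod_{i=1}^{n}(1+\epsilon_{i})=\sum_{T\subseteq\{1,\ldots,n\}}\prod_{i\in T}\epsilon_{i}$ to collapse the outer sum over the indexing subsets $T$ defining $d=\prod_{i\in T}d_{i}$. The residual sum of squares over residues coprime to $D$ is then evaluated by M\"obius inversion, producing the two explicit terms in \eqref{zhao-f}.

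The first step is to observe, using multiplicativity of the Jacobi symbol in the numerator together with $Dd'=d(d')^{2}$, that
\[
\chi_{Dd'}(a)=\left(\tfrac{d(d')^{2}}{a}\right)=\chi_{d}(a)\cdot\bigl[\gcd(a,d')=1\bigr],
\]
so $\chi_{Dd'}(a)$ agrees with $\chi_{d}(a)$ on $\{a:\gcd(a,D)=1\}$ and vanishes off it. Combining this with Proposition~\ref{im-s-f} and the factorisation $\chi_{d}(a)=\prod_{i\in T}\chi_{d_{i}}(a)$, the left-hand side of \eqref{zhao-f} becomes
\[
-\frac{1}{2D}\sum_{\substack{1\le a\le D\\\gcd(a,D)=1}}a^{2}\Bigl(\sum_{\emptyset\neq T\subseteq\{1,\ldots,n\}}\prod_{i\in T}\chi_{d_{i}}(a)\Bigr).
\]

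For each such $a$ one has $\epsilon_{i}:=\chi_{d_{i}}(a)\in\{\pm1\}$, and the subset-sum identity yields
\[
\sum_{T\neq\emptyset}\prod_{i\in T}\epsilon_{i}=\prod_{i=1}^{n}(1+\epsilon_{i})-1=\begin{cases}2^{n}-1,&a\in\CG,\\-1,&a\notin\CG.\end{cases}
\]
Separating the outer sum into $a\in\CG$ and its complement among coprime residues reduces the expression to
\[
-\frac{2^{n-1}}{D}\sum_{a\in\CG}a^{2}+\frac{1}{2D}\sum_{\substack{1\le a\le D\\\gcd(a,D)=1}}a^{2}.
\]

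The final step is to evaluate $S(D):=\sum_{\gcd(a,D)=1}a^{2}$ by M\"obius inversion. Writing $[\gcd(a,D)=1]=\sum_{k\mid\gcd(a,D)}\mu(k)$ and substituting $a=kb$ yields
\[
S(D)=\frac{1}{6}\Bigl(2D^{3}\sum_{k\mid D}\tfrac{\mu(k)}{k}+3D^{2}\sum_{k\mid D}\mu(k)+D\sum_{k\mid D}\mu(k)k\Bigr);
\]
the three inner sums equal $\varphi(D)/D$, $0$ (since $D>1$), and $\prod_{p\mid D}(1-p)$ respectively, giving $S(D)=\tfrac{D^{2}\varphi(D)}{3}+\tfrac{D}{6}\prod_{p\mid D}(1-p)$. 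Dividing by $2D$ produces $\tfrac{D\varphi(D)}{6}+\tfrac{1}{12}\prod_{p\mid D}(1-p)$, which together with the first term matches \eqref{zhao-f}. No step here is a genuine obstacle; the only care needed is checking that $\chi_{Dd'}$ and $\chi_{d}$ agree, up to support, uniformly in $T$ so that the subset-sum identity can be applied in one stroke.
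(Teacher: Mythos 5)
Your proof is correct and takes essentially the same route as the paper: both apply Proposition \ref{im-s-f} to each $d$, use the expansion $\sum_{\emptyset\neq T}\prod_{i\in T}\chi_{d_i}(a)=\prod_{i=1}^{n}\bigl(1+\chi_{d_i}(a)\bigr)-1$ (which the paper phrases via $\chi_{Dd'}(a)=\chi_D(a)\left(\frac{d'}{a}\right)$ and $\sum_{d'\mid D}\left(\frac{d'}{a}\right)=\prod_{i=1}^{n}\bigl(1+\left(\frac{d_i}{a}\right)\bigr)$) to isolate the term over $a\in\CG$ from the sum over residues coprime to $D$, and then evaluate $\sum_{(a,D)=1}a^2$ by M\"obius inversion. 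Your direct inversion of the coprimality indicator with the explicit sum-of-squares formula is just a hands-on rendering of the paper's Dirichlet-convolution identity $\mathfrak{h}_1=\mu*\mathfrak{h}$ combined with Lemma \ref{mobius}, and both computations land on the same closed form $\frac{D}{6}\varphi(D)+\frac{1}{12}\prod_{p\mid D}(1-p)$.
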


\medskip

Before proving the Proposition, we need some elementary facts on Mobius function $\mu$.

\begin{lem}\label{mobius}
Assume $D$ as above and $D>1$, then we have
\[\sum_{m\mid D}\mu(m)=0,\qquad \sum_{m\mid D}\mu(m)\frac{D}{m}=\varphi(D),\qquad\textrm{and}\qquad \sum_{m\mid D}\mu(m)m=\prod_{p\mid D}(1-p).\]
\end{lem}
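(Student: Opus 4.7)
The plan is to treat each of the three identities as an assertion about a multiplicative function evaluated at $D$, and to reduce each to a computation on prime powers $p^k$, where the sum collapses because $\mu(m)=0$ whenever $m$ is not squarefree.

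First, for $\sum_{m\mid D}\mu(m)$, I would identify this sum as the Dirichlet convolution $(\mathbf{1}*\mu)(D)$, which is the standard definition of the Möbius function as the Dirichlet inverse of the constant function $\mathbf{1}$. Since $\mathbf{1}*\mu=\varepsilon$, the indicator of $n=1$, the sum vanishes for every $D>1$. Alternatively, one verifies directly on a prime power $p^k$ that the only surviving terms are $m=1$ and $m=p$, giving $1+(-1)=0$, and then invokes multiplicativity.

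For $\sum_{m\mid D}\mu(m)\tfrac{D}{m}=\varphi(D)$, I would recall the standard factorisation $\varphi=\mu*\mathrm{id}$, which is the Möbius inversion of the identity $\mathrm{id}(n)=\sum_{m\mid n}\varphi(m)$. Equivalently, define $f(n):=\sum_{m\mid n}\mu(m)\tfrac{n}{m}$; then $f$ is multiplicative as a convolution of multiplicative functions, and
\[
f(p^k)=p^k-p^{k-1}=\varphi(p^k),
\]
so $f(D)=\varphi(D)$ by multiplicativity.

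For $\sum_{m\mid D}\mu(m)m$, I would set $g(n):=\sum_{m\mid n}\mu(m)m=(\mu\cdot\mathrm{id})*\mathbf{1}(n)$. Since both $\mu\cdot\mathrm{id}$ and $\mathbf{1}$ are multiplicative, so is $g$, and on a prime power
\[
g(p^k)=\mu(1)\cdot 1+\mu(p)\cdot p=1-p,
\]
because higher powers of $p$ are killed by $\mu$. Hence $g(D)=\prod_{p\mid D}(1-p)$, as claimed.

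None of these steps is a real obstacle; the whole lemma is a standard bookkeeping fact about the Möbius function. The only thing worth pausing over is that all three sums effectively run over \emph{squarefree} divisors of $D$, so the possible factor $4$ inside $D$ (coming from the convention that a discriminant $d_i$ may be divisible by $4$) is harmless: $\mu(m)=0$ whenever $4\mid m$, and the distinct prime divisors $p\mid D$ are exactly the ones entering the final product.
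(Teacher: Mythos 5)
Your proof is correct; the paper itself gives no argument for this lemma, simply citing Apostol's textbook for these standard M\"obius facts, and your derivation via multiplicativity and prime-power evaluation (equivalently the convolution identities $\mathbf{1}*\mu=\varepsilon$, $\varphi=\mu*\mathrm{id}$, and $(\mu\cdot\mathrm{id})*\mathbf{1}$ evaluated at $p^k$) is exactly the standard proof the citation points to. Your closing observation that $\mu$ annihilates non-squarefree divisors, so the factor $4$ that may occur in $D$ under the paper's discriminant conventions is harmless and the product runs precisely over the distinct primes $p\mid D$, is a correct and worthwhile sanity check for the setting in which the lemma is applied.
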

\begin{proof}
All are standard facts on Mobius functions, for a detailed proof, see \cite{A}.
\end{proof}

Now, we come back to the proof of Proposition \ref{zhao1}.

\begin{proof}[Proof of Proposition \ref{zhao1}]
Writing $d=d_{i_1}\cdots d_{i_s}$ and adding up the formulas \eqref{im-f} for all such $d\neq1$, we get
\[\sum_{d\neq 1}L^{(D)}(\chi_d,-1)=-\frac{1}{2D}\sum^D_{a=1}\chi_D(a)a^2\sum_{d'\neq D}\left(\frac{d'}{a}\right).\]
Here, we recall that $d'=\frac{D}{d}$ and $\chi_{Dd'}(a)=\chi_D(a)\left(\frac{d'}{a}\right)$.
By the following identity
\[\sum_{d'\mid D}\left(\frac{d'}{a}\right)=\prod^n_{i=1}\left(1+\left(\frac{d_i}{a}\right)\right),\]
we see that
\[\sum_{d\neq 1}L^{(D)}(\chi_d,-1)=-\frac{1}{2D}\sum^D_{a=1}\chi_D(a)a^2\left(\prod^n_{i=1}\left(1+\left(\frac{d_i}{a}\right)\right)-\left(\frac{D}{a}\right)\right).\]
For the term on the right hand of the above formula, by separating the bracket, we know the first term is non-zero if and only if $a\in\CG$. Noting that $\chi_D(a)=\left(\frac{D}{a}\right)$, then we get
\begin{equation}\label{ind-f-1}
  \sum_{d\neq 1}L^{(D)}(\chi_d,-1)=-\frac{2^{n-1}}{D}\sum_{1\leq a\leq D,\, a\in\CG}a^2+\frac{1}{2D}\sum_{1\leq a\leq D,\, (a,D)=1}a^2.
\end{equation}
To handle the second sum on right hand side in \eqref{ind-f-1}, we invoke some relation on arithmetic function on $\BZ$. For two such arithmetic functions $h_1,h_2$, their Dirichlet product is given as follows
\begin{equation}\label{ind-f-2}
  (h_1*h_2)(n)=\sum_{m\mid n}h_1(m)h_2(n/m).
\end{equation}
For the two arithmetic functions $\mathfrak{h}_1(n)=\sum_{1\leq a\leq n, \, (a,n)=1}\frac{a^2}{n^2}$ and $\mathfrak{h}(n)=\sum_{1\leq a\leq n}\frac{a^2}{n^2}$, one can easily show that $\mathfrak{h}_1=\mu*\mathfrak{h}$ (see also \cite[Exercise 2.14]{A}). Now, write the second sum in right hand side of \eqref{ind-f-1} as
\[\frac{1}{2D}\sum_{1\leq a\leq D,\, (a,D)=1}a^2=\frac{D}{2}\sum_{1\leq a\leq D,\, (a,D)=1}\frac{a^2}{D^2}.\]
Applying formula \eqref{ind-f-2} and Lemma \ref{mobius}, one can directly check the following equality holds
\[\frac{D}{2}\sum_{1\leq a\leq D,\, (a,D)=1}\frac{a^2}{D^2}=\frac{1}{12}\sum_{m\mid D}\mu(D/m)\cdot(m+1)(2m+1)\cdot\frac{D}{m}.\]
By using Lemma \ref{mobius} again, we see that
\[\frac{1}{12}\sum_{m\mid D}\mu(D/m)\cdot(m+1)(2m+1)\cdot\frac{D}{m}=\frac{D}{6}\varphi(D)+\frac{1}{12}\prod_{p\mid D}(1-p).\]
The Proposition now follows.
\end{proof}

\medskip

In the final part of this section, we apply the formula \eqref{zhao-f} to some special $d_i$'s to get some useful estimation on the $2$-adic valuation for $L(\chi_D,-1)$.

\begin{prop}\label{mod-8-5}
Let $D=d_1\cdots d_n$ be given as before. Assume that each $d_i=p_i\equiv 5\mod 8$ are primes, then we have
\[v(L(\chi_D,-1))\geq n.\]
Moreover, if we assume that $\left(\frac{p_i}{p_j}\right)=-1$ for all $1\leq i<j\leq n$, then
\begin{enumerate}
  \item [(1)]If $n$ is even,
  $v(L(\chi_D,-1))\geq n+1$.
  \item [(2)]If $n$ is odd,
  $v(L(\chi_D,-1))=n$.
\end{enumerate}
\end{prop}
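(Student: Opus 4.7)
The plan is to prove all three assertions simultaneously by induction on $n$, based on the identity of Proposition \ref{zhao1} rewritten as
\begin{equation*}
L(\chi_D,-1) \;=\; T_n \;-\; \sum_{\emptyset \neq S \subsetneq \{1,\ldots,n\}} L^{(D)}(\chi_{d_S},-1),
\end{equation*}
where $d_S=\prod_{i\in S}p_i$ and $T_n$ denotes the right-hand side of \eqref{zhao-f}. The Euler product factorisation
\begin{equation*}
L^{(D)}(\chi_{d_S},-1) \;=\; L(\chi_{d_S},-1)\prod_{j\notin S}\bigl(1-\chi_{d_S}(p_j)\,p_j\bigr)
\end{equation*}
is what allows the induction to operate on $s:=|S|$: the primitive $L(\chi_{d_S},-1)$ is controlled by the inductive hypothesis applied to the sub-product $d_S$.

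For the basic estimate $v(L(\chi_D,-1))\geq n$: since $p_j\equiv 5\pmod 8$ and $\chi_{d_S}(p_j)\in\{\pm 1\}$, each Euler factor is $\equiv -4$ or $6\pmod 8$, giving $v\geq 1$ per factor. Combined with the inductive $v(L(\chi_{d_S},-1))\geq s$, this yields $v(L^{(D)}(\chi_{d_S},-1))\geq n$ for every non-empty proper $S$. For $T_n$ itself, the two Bernoulli-type terms $\tfrac{D\varphi(D)}{6}$ and $\tfrac{1}{12}\prod(1-p_i)$ have $v$-valuations $2n-1$ and $2n-2$ respectively (using $v(p_i-1)=2$). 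For the main term $A_n=-\tfrac{2^{n-1}}{D}\sum_{a\in\CG}a^2$, I exploit the involution $a\mapsto D-a$ on $\CG$ (valid because $\chi_{p_i}(-1)=1$): pairing gives $a^2+(D-a)^2\equiv 1\pmod 2$, hence $\sum_{a\in\CG}a^2\equiv |\CG|/2\pmod 2$; since $|\CG|=\varphi(D)/2^n$ has $v_2=n$, this forces $v(A_n)\geq n$ for $n\geq 2$. The base case $n=1$ is a short direct mod-$8$ evaluation of $L(\chi_{p_1},-1)=(S_{\mathrm{NR}}-S_{\mathrm{QR}})/(2p_1)$, in the spirit of Example \ref{ex-bs}.

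Under the additional hypothesis $\left(\frac{p_i}{p_j}\right)=-1$ for $i\neq j$, one has $\chi_{d_S}(p_j)=(-1)^s$, so each Euler factor is $1-p_j\equiv -4\pmod 8$ when $s$ is even ($v=2$) and $1+p_j\equiv 6\pmod 8$ when $s$ is odd ($v=1$). This upgrades the bound to
\begin{equation*}
v\bigl(L^{(D)}(\chi_{d_S},-1)\bigr)\;\geq\;\begin{cases}2n-s&s\text{ even},\\ n&s\text{ odd}.\end{cases}
\end{equation*}
When $n$ is even, even $s<n$ satisfies $2n-s\geq n+2$, so only odd-$s$ terms together with $T_n$ can contribute to the residue of $L(\chi_D,-1)$ modulo $2^{n+1}$. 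When $n$ is odd the roles swap, and the inductive exact equality $v(L(\chi_{d_S},-1))=s$ for odd $s$ (part (2) of the statement applied to a sub-product) makes the odd-$s$ contribution exact. In both sub-cases the sharper assertion reduces to a computation modulo $2^{n+1}$, via the congruence $(1+p_j)/2\equiv 3\pmod 4$ and the inductively controlled leading residue of each $L(\chi_{d_S},-1)/2^s$.

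The main obstacle is a sharper analysis of $A_n$, which requires $\sum_{a\in\CG}a^2$ modulo $4$ rather than merely modulo $2$. My plan is to split $\CG$ according to the parity of $a$ (the two halves have equal size because $D$ is odd), use $a^2\equiv 1\pmod 8$ for odd $a$, and write each even $a$ as $D-a'$ with $a'$ odd to reduce the even contribution to the odd one via $(D-a')^2=D^2-2Da'+a'^2$. Combined with the term $\tfrac{1}{12}\prod(1-p_i)$ (whose valuation is exactly $2n-2$, decisive when $n=2$), this should give $T_n$ modulo $2^{n+1}$ as a closed expression in $\varphi(D)/2^{n+1}$ and the odd parts $(p_i-1)/4$. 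Matching this residue against the sum of odd-$s$ proper-subset contributions --- which should likewise collapse by symmetry of the Legendre conditions among the $p_i$'s --- is the delicate bookkeeping step expected to produce simultaneously the strict inequality for $n$ even and the exact equality for $n$ odd.
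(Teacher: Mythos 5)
Your framework coincides with the paper's own proof: induction on $n$ through Proposition \ref{zhao1}, the Euler factorisation $L^{(D)}(\chi_d,-1)=L(\chi_d,-1)\prod_{p\mid d'}\bigl(1-\chi_d(p)p\bigr)$ with $v(1-p)=2$ and $v(1+p)=1$ for $p\equiv 5\bmod 8$, and the involution $a\mapsto D-a$ on $\CG$. Your argument for the unconditional bound $v(L(\chi_D,-1))\geq n$ (base case $n=1$ by direct mod-$8$ evaluation, then $\sum_{a\in\CG}a^2\equiv \tfrac{1}{2}\#\CG \bmod 2$ with $v(\#\CG)=n$) is essentially complete and is the paper's argument. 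But parts (1) and (2) are not proved in your proposal, and the plan you describe for them misidentifies the mechanism. The ingredient you call ``the main obstacle'' --- $\mathfrak{s}_1=\sum_{a\in\CG}a^2$ modulo $4$ --- is only sketched; the paper's Lemma \ref{a2-sum1} settles it in two lines: modulo $4$ only the odd elements of $\CG$ contribute, each contributing $1$, so $\mathfrak{s}_1\equiv\#\mathfrak{G}_1=\tfrac12\prod_i\tfrac{p_i-1}{2}=2^{n-1}\mathfrak{t}$ with $\mathfrak{t}$ odd (since $p_i\equiv5\bmod 8$ forces $\tfrac{p_i-1}{2}\equiv2\bmod4$), whence $4\mid\mathfrak{s}_1$ and $v$ of the main term of $T_n$ is $\geq n+1$ once $n\geq3$.

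The more serious gap is your endgame. You propose to compute $T_n$ modulo $2^{n+1}$ in closed form and ``match'' its residue against the odd-$s$ contributions, with the conclusion ``expected'' from delicate bookkeeping. No such matching is needed, available, or carried out: once $v(T_n)\geq n+1$ for $n\geq3$, $T_n$ is invisible at level $n$, and the entire dichotomy between (1) and (2) is decided by a pure parity count that your proposal never states. Under the hypothesis $\left(\frac{p_i}{p_j}\right)=-1$ one has $\chi_d(p)=(-1)^s$, so every proper $d$ with $s$ odd satisfies $v\bigl(L^{(D)}(\chi_d,-1)\bigr)=n$ \emph{exactly} (Euler part exactly $n-s$ since each factor is $1+p_j$, primitive part exactly $s$ by the inductive case (2) applied to $d$), while every proper $d$ with $s$ even satisfies $v\geq 2n-s\geq n+1$. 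The number of proper odd-$s$ subsets is $2^{n-1}$ (even) when $n$ is even and $2^{n-1}-1$ (odd) when $n$ is odd; a sum of an even (resp.\ odd) number of terms of exact valuation $n$ has valuation $\geq n+1$ (resp.\ exactly $n$), and \eqref{key-id} then yields (1) and (2) at once. Only the \emph{number} of odd-$s$ terms matters --- their individual $2$-adic units are uncontrolled, so the symmetry-based ``collapse'' you hope for cannot be the argument. Without this counting step your proposal establishes neither the strict inequality in (1) nor the equality in (2); note also that the small cases escape the uniform bound (for $n=2$ both constituents of $T_2$ have valuation exactly $2$ and only their \emph{sum} has valuation $\geq3$, both unit parts being odd), so they require the separate direct verification the paper alludes to.
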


\begin{proof}
Let us first prove the general lower bound in the Proposition. The main idea is by induction on $n$ using formula \eqref{zhao-f}. The case for $n=1$ is similar to Example \ref{ex-bs}. We simply write $d_1=p$ and assume that $p>5$. Recall that
\begin{equation}\label{xp-f}
  L(\chi_p,-1)=-\frac{1}{2p}\sum^p_{a=1}\left(\frac{p}{a}\right) a^2.
\end{equation}
For the sum $\sum^p_{a=1}\left(\frac{p}{a}\right) a^2$, separating it according as $a=2b$ or $2b-1$ with $1\leq b\leq\frac{p-1}{2}$ and noting the following fact (since $p\equiv 5\mod 8$)
\[\sum^{\frac{p-1}{2}}_{b=1}\left(\frac{p}{2b-1}\right)=\frac{1}{2}\sum^{\frac{p-1}{2}}_{b=1}\left(\left(\frac{p}{2b-1}\right)+\left(\frac{p}{p-(2b-1)}\right)\right)=0,\]
we get
\[\sum^p_{a=1}\left(\frac{p}{a}\right) a^2=4\left(\sum^{\frac{p-1}{2}}_{b=1}\left(\frac{p}{2b}\right)b^2+\sum^{\frac{p-1}{2}}_{b=1}(b^2-b)\left(\frac{p}{2b-1}\right)\right)=:4\mathfrak{s}.\]
We then claim that $\mathfrak{s}$ is odd. In fact, by modulo $2$ and recalling that $p\equiv 5\mod 8$, we see that
\[\mathfrak{s}\equiv \sum^{\frac{p-1}{2}}_{b=1} b\equiv\frac{(p+1)(p-1)}{8} \equiv1\mod 2.\]
Therefore, by formula \eqref{xp-f}, we have $v(L(\chi_p,-1))=1$. The case for $n=1$ then follows. Now, we let $n\geq2$ and assume the proposition holds when $d\neq D$, i.e. $v(L(\chi_d,-1))\geq s$. From formula \eqref{zhao-f}, we get the following key formula in our induction argument
\begin{equation}\label{key-id}
 \sum_{d\neq 1}L^{(D)}(\chi_d,-1)=-\frac{2^{n-1}}{D}\sum_{1\leq a\leq D,\, a\in\CG}a^2+\left(\frac{D}{6}+\frac{(-1)^n}{12}\right)\varphi(D).
\end{equation}
Here, the first sum is taken with all $d=d_{i_1}\cdots d_{i_s}$ dividing $D$ such that $s>0$. For the term $\sum_{1\leq a\leq D,\, a\in\CG}a^2$, by the lemma below, it has $2$-adic valuation $1$ or at least $2$ according as $n=2$ or $n\geq4$. In either cases, we always have
\[v\left(-\frac{2^{n-1}}{D}\sum_{1\leq a\leq D,\, a\in\CG}a^2\right)\geq n.\]
It is also easily to see that $v\left(\left(\frac{D}{6}+\frac{(-1)^n}{12}\right)\varphi(D)\right)=2n-2$, which is at least $n$ since $n\geq2$. Next, we deal with each term $L^{(D)}(\chi_d,-1)$ in the left hand side in \eqref{key-id}. Note the equality
\[L^{(D)}(\chi_d,-1)=\left(\prod_{p\nmid d'}(1-\chi_d(p)p)\right)\cdot L(\chi_d,-1)\qquad d'=D/d.\]
The induction hypothesis shows that if $d\neq D$, $v(L(\chi_d,-1))\geq s$. Noting that $v(1-\chi_d(p)p)\geq1$ for all $p\mid d'$, we get
\[v(L^{(D)}(\chi_d,-1))=v\left(\left(\prod_{p\nmid d'}(1-\chi_d(p)p)\right)\right)+v(L(\chi_d,-1))\geq n\]
for all $d\neq D$. Finally, by formula \eqref{key-id}, we must have
\[v(L(\chi_D,-1))\geq n,\]
thus
the first assertion of the proposition follows. For the next two special cases, since the method are similar, we only give a detailed proof for (2). We assume now that $\left(\frac{p_i}{p_j}\right)=-1$ for $i\neq j$ and $n$ is odd. We still use induction on $n$, the case $n=1$ is done in the above. Write $d=d_{i_1}\cdots d_{i_s}$. If $s$ is even, by $n$ is odd, we have $s+1\leq n$. Then
$L^{(D)}(\chi_d,-1)=\left(\prod_{p\mid d'}(1-p)\right)L(\chi_d,-1)$ has $2$-adic valuation at least $2n-s\geq n+1$. Since $n\geq3$, by the lemma below and a similar method as above, each term in the right hand side in \eqref{key-id} has $2$-adic valuation at least $n+1$. Now, for the terms $L^{(D)}(\chi_d,-1)$ with $s<n$ odd, by induction hypothesis, we have $v(L^{(D)}(\chi_d,-1))=n$. Note that the number of all such terms are odd, therefore, from \eqref{key-id}, we must have $v(L(\chi_D,-1))=n$, (1) then follows.
\end{proof}

We now prove the following lemma used in the proof of Proposition \ref{mod-8-5}.

\begin{lem}\label{a2-sum1}
Recall that $D=d_1\cdots d_n$ with $d_i=p_i\equiv 5\mod 8$ are primes. For $\mathfrak{s}_1=\sum_{1\leq a\leq D,\, a\in\mathcal{G}}a^2$, we have
\[\mathfrak{s}_1\equiv 2^{n-1}\mathfrak{t}\mod 4.\]
where $\mathfrak{t}$ is some odd integer. In particular, when $n\geq3$, $\mathfrak{s}_1$ is always divisible by $4$.
\end{lem}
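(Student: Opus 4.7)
The plan is to reduce the calculation modulo $4$ to a parity count inside $\mathcal{G}$, and then compute $|\mathcal{G}|$ exactly using the Chinese Remainder Theorem.

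First I would exploit the fact that $D=p_1\cdots p_n$ is odd, so elements of $\mathcal{G}$ may be odd or even. Since $a^2\equiv 1\pmod 8$ when $a$ is odd and $a^2\equiv 0\pmod 4$ when $a$ is even, one immediately gets
\[
\mathfrak{s}_1 \;\equiv\; \#\{a\in\mathcal{G}:\, a\text{ odd}\}\pmod 4.
\]
So the task reduces to counting odd elements of $\mathcal{G}$ modulo $4$.

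Next I would use a symmetry argument. Each $p_i\equiv 5\pmod 8$ satisfies $p_i\equiv 1\pmod 4$, hence $\chi_{d_i}(-1)=1$ for every $i$. Consequently the involution $a\mapsto D-a$ maps $\mathcal{G}$ to itself, and because $D$ is odd it has no fixed points and swaps the parity of $a$. Thus odd and even elements of $\mathcal{G}$ are in bijection, giving
\[
\#\{a\in\mathcal{G}:a\text{ odd}\}=\tfrac{1}{2}|\mathcal{G}|.
\]

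Finally, by CRT $\mathcal{G}$ corresponds to the product of the sets of nonzero squares in $(\BZ/p_i\BZ)^\times$, so $|\mathcal{G}|=\prod_{i=1}^n (p_i-1)/2$. Because $p_i\equiv 5\pmod 8$, we can write $(p_i-1)/2 = 2u_i$ with $u_i=(p_i-1)/4$ odd. Hence $|\mathcal{G}|=2^n\prod_i u_i$, so
\[
\tfrac{1}{2}|\mathcal{G}| \;=\; 2^{n-1}\,\mathfrak{t}, \qquad \mathfrak{t}:=\prod_{i=1}^n u_i \text{ odd},
\]
which yields $\mathfrak{s}_1\equiv 2^{n-1}\mathfrak{t}\pmod 4$, and in particular $4\mid\mathfrak{s}_1$ when $n\geq 3$. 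There is no real obstacle here; the only point that needs care is verifying that $\chi_{d_i}(-1)=1$ (which is where the hypothesis $p_i\equiv 5\pmod 8$, rather than just $p_i\equiv 1\pmod 4$, is not essential for the involution but is crucial for the $2$-adic size of $(p_i-1)/2$).
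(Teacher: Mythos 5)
Your proof is correct and takes essentially the same route as the paper's: you reduce $\mathfrak{s}_1 \bmod 4$ to counting the odd elements of $\mathcal{G}$ (odd squares being $\equiv 1 \bmod 8$), use the fixed-point-free, parity-swapping involution $a \mapsto D-a$ to see this count equals $\tfrac{1}{2}\#\mathcal{G} = \tfrac{1}{2}\prod_{i=1}^n \frac{p_i-1}{2}$, and then read off the $2$-adic valuation from $p_i \equiv 5 \pmod 8$. This matches the paper's argument step for step (the paper's $\mathfrak{G}_0,\mathfrak{G}_1$ decomposition is exactly your parity split), so nothing further is needed.
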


\begin{proof}
For $i=0,1$, we define $\mathfrak{G}_i$ to be the set
$\{a\in \mathcal{G}: a\equiv i\mod 2\}$. Since sending $x\in \mathfrak{G}_0$ to $D-x\in \mathfrak{G}_1$ is a bijection and that it is easy to show that $\#(\mathfrak{G}_1\cup\mathfrak{G}_2)=\frac{\varphi(D)}{2^n}$, then we have
\[\#(\mathfrak{G}_1)=\frac{1}{2}\prod^n_{i=1}\frac{p_i-1}{2}.\]
Now, the lemma follows from the following congruence
\[\mathfrak{s}_1\equiv \#(\mathfrak{G}_1)\mod 4.\]

\end{proof}

\bigskip

The following theorem is the main result in this section.

\begin{thm}\label{mod-8-3}
Let $D=d_1\cdots d_n$ satisfying $d_1=4p_1$ where $p_1\equiv 3\mod 8$ is a prime and $d_i=p_i\equiv 5\mod 8$ are primes for $i>1$. Then we have
\begin{equation}\label{8-3-f1}
  v(L(\chi_D,-1))\geq n.
\end{equation}
Furthermore,
if we
assume that $\left(\frac{p_i}{p_j}\right)=-1$ for all $2\leq i<j\leq n$.
\begin{enumerate}
  \item [(1)]If $\left(\frac{p_1}{p_j}\right)=-1$ for $j>1$ and $n$ is odd, then $v(L(\chi_D,-1))=n$.
  \item [(2)]Assume that one of the following conditions holds
  \begin{enumerate}
    \item [(2a)]$\left(\frac{p_1}{p_j}\right)=-1$ for $j\geq2$ and $n$ is even.
    \item [(2b)]$\left(\frac{p_1}{p_j}\right)=1$ for $j\geq2$.
    \item [(2c)]$\left(\frac{p_1}{p_2}\right)=-1$, $\left(\frac{p_1}{p_j}\right)=1$ for $j>2$ and $n$ is even.
  \end{enumerate}
  Then $v(L(\chi_D,-1))\geq n+1$.
\end{enumerate}
\end{thm}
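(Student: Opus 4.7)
The plan is to proceed by induction on $n$, elaborating the method of Proposition \ref{mod-8-5} to accommodate the extra factor $d_1 = 4p_1$. The base case $n=1$, where $D = 4p_1$, is precisely Example \ref{ex-bs}, which gives $v(L(\chi_D,-1)) = 1$; this settles both \eqref{8-3-f1} and case~(1) at $n=1$. For the inductive step, the key tool is again formula \eqref{zhao-f}, which we rewrite by isolating $L^{(D)}(\chi_D,-1) = L(\chi_D,-1)$:
\begin{equation*}
L(\chi_D,-1) \;=\; -\sum_{1 \ne d \ne D} L^{(D)}(\chi_d,-1) \;-\;\frac{2^{n-1}}{D}\,\mathfrak{s}_1 \;+\; E(D),
\end{equation*}
where $\mathfrak{s}_1 = \sum_{a \in \CG} a^2$ and $E(D) = \frac{D}{6}\varphi(D) + \frac{1}{12}\prod_{p \mid D}(1-p)$.

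A direct expansion using $D = 4 p_1 \cdots p_n$, $p_1 \equiv 3 \pmod 8$, $p_i \equiv 5 \pmod 8$ ($i \geq 2$) yields
\[
E(D) \;=\; \frac{\prod_{i=1}^n (p_i-1)}{12}\bigl(16\, p_1 \cdots p_n + (-1)^{n+1}\bigr),
\]
so that $v(E(D)) = 2n - 3$. An analog of Lemma \ref{a2-sum1} for the present $D$ (splitting $\CG$ by the parity of $a$ and invoking the involution $a \leftrightarrow D - a$) pins down $v(\mathfrak{s}_1)$; for $n \geq 3$ this immediately yields $v\bigl(-\tfrac{2^{n-1}}{D}\mathfrak{s}_1\bigr) \geq n$, while for $n \in \{1,2\}$ a compensating cancellation between the $\mathfrak{s}_1$-term and $E(D)$ modulo a suitable power of $2$ will be needed to restore the bound.

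For each intermediate term $L^{(D)}(\chi_d,-1)$ with $1 \ne d \ne D$, we factor
\[
L^{(D)}(\chi_d,-1) \;=\; L(\chi_d,-1) \cdot \prod_{p \mid d'}\bigl(1 - \chi_d(p)\, p\bigr), \qquad d' = D/d,
\]
and estimate each factor separately. If $d_1 \nmid d$, so that $d = \prod_{j \in J} p_j$ for some non-empty $J \subseteq \{2,\ldots,n\}$, Proposition \ref{mod-8-5} supplies the required valuation of $L(\chi_d,-1)$; if $d_1 \mid d$, the induction hypothesis applied to the smaller family $\{d_1\} \cup \{d_j : j \in J\}$ does so. For each odd prime $p \mid d'$ one has $v(1 - \chi_d(p)\, p) \geq 1$, with $v \geq 2$ precisely when $\chi_d(p) \equiv p \pmod 4$; for $p = 2$ (which only occurs when $d_1 \nmid d$), $v(1 - \chi_d(2)\cdot 2) = 0$. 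The Legendre-symbol hypotheses of cases~(1) and (2a)--(2c) enter exactly through these sharpened inequalities.

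Combining all the estimates in \eqref{zhao-f} delivers the general bound \eqref{8-3-f1}. For the refined cases, the concluding step is a parity bookkeeping in the spirit of the final step in the proof of Proposition \ref{mod-8-5}(2): we enumerate those subsets $\{i_1,\ldots,i_s\}$ for which $L^{(D)}(\chi_d,-1)$ has $v$ exactly $n$ rather than $\geq n+1$, and exploit the parity of that count. An odd count forces $v(L(\chi_D,-1)) = n$ (case~(1)), while an even count gives $v(L(\chi_D,-1)) \geq n+1$ (cases~(2a)--(2c)). The main obstacle will be case~(2c): the mixed hypothesis $\left(\frac{p_1}{p_2}\right) = -1$ while $\left(\frac{p_1}{p_j}\right) = 1$ for $j > 2$ forces the enumeration to split according to whether $p_2 \mid d$ and whether $d_1 \mid d$, and showing that the resulting count of extremal terms is even will be the crux of the argument.
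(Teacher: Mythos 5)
Your skeleton (induction on $n$ via \eqref{zhao-f}, the Euler-factor decomposition $L^{(D)}(\chi_d,-1)=L(\chi_d,-1)\prod_{p\mid d'}(1-\chi_d(p)p)$, the valuation $2n-3$ for the elementary term, and the final parity count of extremal terms) is exactly the paper's, but there is a genuine gap at the base case of (2c). For $n=2$, i.e.\ $D=4p_1p_2$ with $\left(\frac{p_1}{p_2}\right)=-1$, you need $v(L(\chi_D,-1))\geq 3=n+1$, and you propose to get it (together with $n=1$) from ``a compensating cancellation between the $\mathfrak{s}_1$-term and $E(D)$.'' This is precisely what is \emph{not} available: at $n=2$ the identity \eqref{8-3-f2} has $v(E(D))=2n-3=1$ and the $\mathfrak{s}_1$-term at valuation about $n$, so the analytic machinery can only certify $v\geq n=2$, one short of what (2c) needs. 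The paper does not close this analytically; it imports Lemma \ref{bs-lem}, the Browkin--Schinzel theorem that $\#(K_2\CO_F(2))\geq 16$ for such $D$, fed back through the Birch--Tate formula (Corollary \ref{k2-zeta}) to give $v(L(\chi_D,-1))\geq 3$ --- and the introduction explicitly states the method cannot recover the $n=2$ case on its own. Note also that this arithmetic input is not only the base case: it recurs inside the inductive step of (2c) (the paper's Case (i)), since for $d=d_1p_2$ (so $s=2$, even, $d_1\mid d$) the bound $v(L(\chi_d,-1))\geq s+1=3$ is again Lemma \ref{bs-lem}. Relatedly, you misplace the crux of (2c): the parity count you worry about is routine (the extremal $d$'s with $d_1p_2\nmid d$, $s$ odd, number $2^{n-3}$, even for $n\geq 4$); the real obstruction is the $n=2$ input just described.

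Two smaller points. Your ``analog of Lemma \ref{a2-sum1}'' is mis-specified: here $D$ is even, so every $a\in\CG$ is odd and a parity split of $\CG$ is vacuous. The paper's Lemma \ref{a2-sum2} instead splits $\CG$ by the class of $a^2$ modulo $16$ (equivalently $a\equiv\pm1$ versus $\pm3\bmod 8$), with the involution $a\mapsto D-a$ exchanging the two parts, yielding $\mathfrak{t}_1\equiv 2^n\mathfrak{y}\bmod 16$ with $\mathfrak{y}$ odd; this is what gives $v\bigl(\tfrac{2^{n-1}}{D}\mathfrak{t}_1\bigr)\geq n+1$ for $n\geq 4$ (your claim of only $\geq n$ for $n\geq 3$ is too weak for the refined cases). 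Finally, the remaining small cases ($n=2,3$ for \eqref{8-3-f1}, $n=3$ for (1)) are handled in the paper by direct verification with the method of Proposition \ref{mod-8-5}, not by cancellation; with those checks and Lemma \ref{bs-lem} inserted, the rest of your argument matches the paper's proof.
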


\bigskip

Here, (2c) will be our concerned case to study the structure of the $2$-primary part of $K_2\CO_F$ in section \S\ref{k2-str}. Before giving the proof, we must say that the proof of (2c) involves all the before cases, i.e., Proposition \ref{mod-8-5}, (1) and (2a) to (2b) in Proposition \ref{mod-8-3} in the induction arguments and also a key arithmetic result of Browkin and Schinzel \cite{BS} (see also \cite{Q3}), which we state in the following lemma.

\begin{lem}[Browkin and Schinzel]\label{bs-lem}
Let $D=d_1d_2$ satisfying $d_1=4p_1$ where $p_1\equiv 3\mod 8$ is a prime and $d_2=p_2\equiv 5\mod 8$ is a prime. Then we have
\[v(L(\chi_D,-1))\geq3.\]
\end{lem}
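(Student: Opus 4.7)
The plan is to deduce this lemma from Corollary \ref{k2-zeta} (the Birch-Tate formula) together with the classical arithmetic result of Browkin and Schinzel on the structure of $K_2\CO_F$ for this family. For $F = \BQ(\sqrt{D})$ with $D = 4p_1p_2$ as in the statement, Browkin and Schinzel (see \cite[Corollary 1]{BS}, and also \cite{Q3}) show via Hilbert symbol computations that $K_2\CO_F(2) \cong (\BZ/2\BZ) \times (\BZ/2^\delta\BZ)$ with $\delta \geq 3$, hence $\#K_2\CO_F$ is divisible by $2^{1+\delta} \geq 16$. By Corollary \ref{k2-zeta}, $\#K_2\CO_F = -2L(\chi_F,-1)$, so $v(L(\chi_F,-1)) = v(\#K_2\CO_F) - 1 \geq \delta \geq 3$.

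An alternative, more self-contained analytic route would be to apply the induction identity \eqref{key-id} with $n = 2$. First one computes $v(\varphi(D)) = v(2(p_1-1)(p_2-1)) = 1 + 1 + 2 = 4$, using $p_1 \equiv 3$ and $p_2 \equiv 5 \pmod 8$, so the boundary term $\frac{(2D+1)\varphi(D)}{12}$ has $v = 2$. By Example \ref{ex-bs} and the $n=1$ case of Proposition \ref{mod-8-5}, $v(L(\chi_{d_i},-1)) = 1$ for $i = 1,2$. Expanding the Euler factors and using quadratic reciprocity (which forces $\chi_{d_1}(p_2) = \chi_{d_2}(p_1) = \left(\frac{p_1}{p_2}\right)$, together with $\chi_{d_2}(2) = -1$ since $p_2 \equiv 5 \pmod 8$), a case split on $\left(\frac{p_1}{p_2}\right) = \pm 1$ shows that exactly one of $L^{(D)}(\chi_{d_1},-1)$ and $L^{(D)}(\chi_{d_2},-1)$ has $v = 3$ and the other has $v = 2$. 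One also needs an analog of Lemma \ref{a2-sum1} adapted to the mixed-discriminant setting to bound $v\bigl(\frac{2}{D}\sum_{a \in \CG} a^2\bigr) \geq 3$.

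The main obstacle is the remaining cancellation: the $v = 2$ contribution from the imprimitive L-value must combine with the $v = 2$ boundary term $\frac{(2D+1)\varphi(D)}{12}$ so that, after these and the character sum, the right-hand side of \eqref{key-id} has $v \geq 3$. This delicate congruence modulo $8$ appears to require essentially the same arithmetic input (Hilbert symbol / class field theoretic) that Browkin and Schinzel use to exhibit an element of order $8$ in $K_2\CO_F$. For this reason, directly invoking their theorem via the Birch-Tate formula as in the first paragraph is the most efficient route to the stated bound.
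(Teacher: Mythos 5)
Your first paragraph is exactly the paper's proof: the authors likewise invoke the Browkin--Schinzel lower bound $\#(K_2\CO_F(2))\geq 16$ (cited there as \cite[Corollary 2]{BS}, not Corollary 1; the full structure $(\BZ/2\BZ)\times(\BZ/2^\delta\BZ)$ with $\delta\geq 3$ is Qin's \cite{Q3}, but only the order bound is needed) and then conclude $v(L(\chi_D,-1))\geq 3$ from Corollary \ref{k2-zeta}, just as you do. The alternative analytic route you sketch and correctly abandon is not attempted in the paper either, so your proposal matches the paper's argument in substance.
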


\begin{proof}
Write $F=\BQ(\sqrt{D})$.
Recall that $K_2\CO_F(2)$ denotes the $2$-primary part of $K_2\CO_F$. By \cite[Corollary 2]{BS}, we have $\#(K_2\CO_F(2))\geq16$.
Then the lemma follows from Corollary \ref{k2-zeta}.
\end{proof}

\medskip

As usual the formula \eqref{zhao-f} in Proposition \ref{zhao1} in this case becomes
\begin{equation}\label{8-3-f2}
  \sum_{d\neq1} L^{(D)}(\chi_d,-1)=-\frac{2^{n-1}}{D}\sum_{1\leq a\leq D,\, a\in\CG}a^2+\left(\frac{D}{6}+\frac{(-1)^{n+1}}{24}\right)\varphi(D).
\end{equation}
By a more delicate analysis on elements in $\CG$, we can get the following key estimation for the $2$-adic valuation of the sum $\sum_{1\leq a\leq D,\, a\in\CG}a^2$.

\begin{lem}\label{a2-sum2}
Let $D$ be given as in Theorem \ref{mod-8-3}. Put $\mathfrak{t}_1=\sum_{1\leq a\leq D,\, a\in\CG}a^2$. Then
\[\mathfrak{t}_1\equiv 2^n \mathfrak{y} \mod 16.\]
Here, $\mathfrak{y}$ is some odd integer.
In particular, when $n\geq4$, we have
$v(\mathfrak{t}_1)\geq4$.
\end{lem}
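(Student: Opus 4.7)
The plan is to exploit the natural involution $a \leftrightarrow D-a$ on $\mathcal{G}$, reducing the computation of $\mathfrak{t}_1 \bmod 16$ to the value $\#\mathcal{G} \bmod 16$, and then compute $\#\mathcal{G}$ via character orthogonality.

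First I would record two structural observations about $\mathcal{G}$. Since $\chi_{d_1}(a)=1$ forces $(a,4p_1)=1$, and $\chi_{p_i}(a)=1$ forces $(a,p_i)=1$, every $a\in\mathcal{G}$ is coprime to $D$ and in particular odd. Moreover, each $d_i$ is the discriminant of a real quadratic field, so $\chi_{d_i}(-1)=1$; hence $a\mapsto D-a$ is an involution on $\mathcal{G}$, and it has no fixed point because $D/2$ is even while $a$ is odd.

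Next I would compute $\#\mathcal{G}$. The characters $\chi_{d_1},\chi_{d_2},\dots,\chi_{d_n}$ are independent quadratic characters on $(\BZ/D\BZ)^\times$ (they live on disjoint prime-power components of the conductor), so the map $(\BZ/D\BZ)^\times \to \{\pm1\}^n$ defined by $a\mapsto (\chi_{d_i}(a))_i$ is surjective, giving $\#\mathcal{G}=\varphi(D)/2^n$. With $\varphi(D)=2(p_1-1)\prod_{i\geq 2}(p_i-1)$, and using $v_2(p_1-1)=1$ (since $p_1\equiv 3\bmod 8$) and $v_2(p_i-1)=2$ (since $p_i\equiv 5\bmod 8$), I get $v_2(\#\mathcal{G})=n$ exactly; write $\#\mathcal{G}=2^n c$ with $c$ odd.

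The main step is a mod $16$ calculation for each pair $\{a,D-a\}\subset\mathcal{G}$. Since $D=4p_1\cdots p_n$ with $p_1\cdots p_n$ odd, we have $D\equiv 4\cdot(\text{odd})\bmod 16$, hence $D^2\equiv 0\bmod 16$ and $2aD\equiv 8\bmod 16$ for every odd $a$. Also $a^2\equiv 1\bmod 8$ gives $2a^2\equiv 2\bmod 16$. Therefore
\begin{equation*}
a^2+(D-a)^2 \;=\; 2a^2 - 2aD + D^2 \;\equiv\; 2-8+0 \;\equiv\; 10 \pmod{16}.
\end{equation*}
Summing over the $\#\mathcal{G}/2$ pairs yields $\mathfrak{t}_1\equiv 5\,\#\mathcal{G}\pmod{16}$. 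Since $\#\mathcal{G}=2^n c$ with $c$ odd, we conclude $\mathfrak{t}_1\equiv 2^n\,(5c)\pmod{16}$, proving the first assertion with $\mathfrak{y}=5c$; and when $n\geq 4$ we have $2^n\equiv 0\pmod{16}$, giving $v(\mathfrak{t}_1)\geq 4$.

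There is no genuine obstacle here; the only points needing care are the independence of the $\chi_{d_i}$ (so the count $\#\mathcal{G}=\varphi(D)/2^n$ is exact, not merely a divisibility) and the fact that the involution $a\mapsto D-a$ has no fixed points, so that pairs are counted correctly. Both are immediate given the assumption $4\mid D$ and the disjoint supports of the conductors of the $d_i$'s.
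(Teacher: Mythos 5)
Your proof is correct and follows essentially the same route as the paper: both rest on the fixed-point-free involution $a\mapsto D-a$ of $\CG$ together with the count $\#\CG=\varphi(D)/2^n$, and both arrive at the same congruence $\mathfrak{t}_1\equiv 10\prod_{i=1}^n\frac{p_i-1}{2}\pmod{16}$. Your direct expansion $a^2+(D-a)^2=2a^2-2aD+D^2\equiv 10\pmod{16}$ merely streamlines the paper's bookkeeping with the sets $\mathfrak{W}_0,\mathfrak{W}_1$ (classes with $a^2\equiv 1,9\bmod 16$, which the involution swaps), and you are in fact more careful than the paper about the fixed-point-freeness and the exactness of the count $\#\CG=\varphi(D)/2^n$.
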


\begin{proof}
Note that $D$ is even. Thus any $a\in \CG$ is an odd integer. For $i=0,1$, we put
\[\mathfrak{W}_i=\{a\in \CG: a^2\equiv 1+8i\mod 16\}.\]
One can easily see that $a\in \CG$ if and only if $D-a\in\CG$. We claim further that the automorphism of $\CG$ sending $a$
to $D-a$ maps $\mathfrak{W}_0$ onto $\mathfrak{W}_1$. In fact, for $i=0,1$, note that $a\equiv \pm(1+2i)\mod 8$ if and only if $a^2\equiv 1+8i\mod 16$, i.e., $a\in\mathfrak{W}_i$. By a direct checking on the residue class for $D-a$, the claim follows. From the claim, we know that $\#(\mathfrak{W}_1)=\#(\mathfrak{W}_2)$. Therefore, we get
\[\mathfrak{t}_1\equiv \#(\mathfrak{W}_0)+9\cdot\#(\mathfrak{W}_1)=10\cdot \#(\mathfrak{W}_0)\mod 16.\]
Since $\CG$ is a disjoint union of $\mathfrak{W}_0$ and $\mathfrak{W}_1$ and that $\CG$ has $\frac{\varphi(D)}{2^n}$ elements, then we have
\[\mathfrak{t}_1\equiv 10 \cdot \prod^n_{i=1}\left(\frac{p_i-1}{2}\right)\mod 16.\]
Therefore, put $\mathfrak{y}=\left(10 \cdot \prod^n_{i=1}\left(\frac{p_i-1}{2}\right)\right)/2^n$, the lemma follows.
\end{proof}

Now, we come back to the proof for Theorem \ref{mod-8-3}.

\begin{proof}[Proof of Theorem \ref{mod-8-3}]
We first show that inequality \eqref{8-3-f1} holds. When $n=1$, it has been done in Example \ref{ex-bs}. Using both the method and results in Proposition \ref{mod-8-5}, one can also show that \eqref{8-3-f1} holds for $n=2,3$. So, we may assume that $n\geq4$. The two terms on the right hand side in \eqref{8-3-f2} have $2$-adic valuations given as follows
\[v\left(\left(\frac{D}{6}+\frac{(-1)^{n+1}}{24}\right)\varphi(D)\right)=2n-3\geq n+1,\]
and
\[v\left(-\frac{2^{n-1}}{D}\sum_{1\leq a\leq D,\, a\in\CG}a^2\right)\geq n+1.\]
Here, we use the fact that $n\geq4$ in the first inequality and Lemma \ref{a2-sum2} in the second. For each term with $d\neq D$ on the left hand side in \eqref{8-3-f2}, by Euler product for $L^{(D)}(\chi_d,-1)$ and the following facts
\[v(1-\chi_d(2)2)=0\quad\textrm{  and  }\quad v(1-\chi_d(p)p)\geq1\]
for every odd prime $p\mid D$, noting also the induction hypothesis, we get
$$v(L(\chi_D,-1))\geq n.$$
Therefore, the inequality \eqref{8-3-f1} follows. Next, we prove (1). For $n=1$, it is done in the above proof. One can also check directly that the case for $n=3$ is true. So we may assume that $n\geq5$. Then each term on the right hand side in \eqref{8-3-f2} has $2$-adic valuation at least $n+1$. Thus, we only need to deal with the terms $L^{(D)}(\chi_d,-1)$ (with $d\neq D$) on the left hand side of \eqref{8-3-f2}. Recall that $s$ is the positive integer such that $d=d_{i_1}\cdots d_{i_s}$. By Proposition \ref{mod-8-5}, formula \eqref{8-3-f1} and the induction hypothesis, we can show that $v(L^{(D)}(\chi_d,-1))\geq n+1$ holds for the following two cases:
either $s$ is even
or $s$ is odd and $d_1$ does not divide $d$. For the remaining case where $s$ is odd and $d_1\mid d$, since $\chi_d(p)=-1$ for every prime $p\mid d'=D/d$, thus $L^{(D)}(\chi_d,-1)=\prod_{p\mid d'}(1+p)\cdot L(\chi_d,-1)$. Since the induction hypothesis shows that $v(L(\chi_d,-1))=s$, noting that each $p\mid d'$ satisfies $p\equiv 5\mod 8$, we must have $v(L^{(D)}(\chi_d,-1))=n$. Since $n\geq5$, we see that the total number of $d\neq D$ in this case is odd, therefore, from formula \eqref{8-3-f2}, we get $v(L(\chi_D,-1))=n$. Therefore, (1) follows. To prove (2), by using the same proof for $n=1$ in \eqref{8-3-f1}, Lemma \ref{bs-lem} and a direct computation, we can prove that (2) holds for $n<4$. Therefore, we may assume that $n\geq4$, then the right hand side of \eqref{8-3-f2} has $2$-adic valuation at least $n+1$. So, we only need to deal with the terms $L^{(D)}(\chi_d,-1)$ for $d\neq D$.
Let us first prove (2a). If $s$ is even, $v(L^{(D)}(\chi_d,-1))\geq n+1$ follows from the induction hypothesis. If $s$ is odd, from (1) and $\chi_d(p)=-1$, we see that $v(L^{(D)}(\chi_d,-1))=n$. Note that the number of all $d\neq D$ such that $s$ is odd is an even integer, thus by \eqref{8-3-f2} we must have $v(L(\chi_D,-1))\geq n+1$, then (2a) follows. For the case (2b), from the induction hypothesis and Proposition \ref{8-3-f1}, we can easily check that $v(L^{(D)}(\chi_d,-1))\geq n+1$ holds for the cases: either $d_1\mid d$ or $d_1\nmid d$ and $s$ is even. For the remaining case when $d_1\nmid d$ and $s$ is odd, note that $p_1\mid d'$, and for each $p\mid d'$, $\chi_d(p)=1$ or $-1$ according as $p=p_1$ or $p\neq p_1$, we get
\[v(L^{(D)}(\chi_d,-1))=v((1-p_1)\cdot\prod_{p\mid d',\, p\neq p_1}(1+p)L(\chi_d,-1))=n.\]
Here we use (2) in Proposition \ref{mod-8-5} in the last equality. Since the number of all such $d$'s is even, thus the sum of all these $L^{(D)}(\chi_d,-1)$'s has $2$-adic valuation at least $n+1$.
Therefore, from equality \eqref{8-3-f2}, we have $v(L(\chi_D,-1))\geq n+1$. Therefore, (2b) follows. Finally, we prove (2c). Since $n$ is even, and from Lemma \ref{bs-lem}, we know that (2c) holds for $n=2$. Thus we may assume that $n\geq4$. So, the $2$-adic valuation on the right hand side of \eqref{8-3-f2} is at least $n+1$. To get the assertion in (2c), we only need to study the terms $L^{(D)}(\chi_d,-1)$ for $d\neq D$ on the left hand side of \eqref{8-3-f2}. We divide the proof into the following four cases:
\begin{enumerate}
   \item [Case (i)]If $d_1\mid d$ and $s$ is even. Then by either the induction hypothesis or (2b), we get $v(L(\chi_d,-1))\geq s+1$. Therefore, we must have $v(L^{{D}}(\chi_d,-1))\geq n+1$ for such $d$'s.
  \item [Case(ii)]If $d_1\mid d$ and $s$ is odd. For every $p\mid d'$, we see that $\chi_d(p)=-1$ (resp. $1$) if $p_2\mid d'$ and $p=p_2$ (resp. $p\neq p_2$). Then $v\left(\prod_{p\mid d'}(1-\chi_d(p)p)\right)$ is at least $2(n-s)-1$ (resp. $2(n-s)$) according as $p_2\mid d'$ (resp. $p_2\nmid d'$). If $p_2\mid d'$, so $p_2\nmid d$, by (2b), we have $v(L(\chi_d,-1))\geq s+1$. Thus we get $v(L^{(D)}(\chi_d,-1))\geq 2n-s\geq n+1$ since $n$ is even. If $p_2\nmid d'$, so $p_2\mid d$, by the inequality \eqref{8-3-f1}, we have $v(L(\chi_d,-1))\geq s$. Thus we get $v(L^{(D)}(\chi_d,-1))\geq 2n-s\geq n+1$. Therefore, case (ii) follows.
  \item [Case(iii)]If $d_1\nmid d$ and $s$ is even. From (1) in Proposition \ref{mod-8-5}, we always have $v(L(\chi_d,-1))\geq s+1$. Thus we get
  $v(L^{(D)}(\chi_d,-1))\geq n+1$.
  \item [Case(iv)]If $d_1\nmid d$ and $s$ is odd. From (2) in Proposition \ref{mod-8-5}, we have $v(L(\chi_d,-1))=s$. Since $p_1\mid d'$, for every $p\mid d'$, we can easily check that $\chi_d(p)=-1$ or $1$ according as $p_2\mid d$ or $p_2\nmid d$ and $p=p_1$. So, if $p_2\mid d$, we get $v\left(\prod_{p\mid d'}(1-\chi_d(p)p)\right)=n-s+1$, thus we must have $v(L^{(D)}(\chi_d,-1))\geq n+1$. For the remaining case when $p_2\nmid d$, note that $v\left(\prod_{p\mid d'}(1-\chi_d(p)p)\right)=n-s$, we get $v(L^{(D)}(\chi_d,-1))=n$. However, the total number of $d$'s such that $d_1p_2$ does not divide $d$ is even (since $n\geq4$), thus the sum of these $L^{(D)}(\chi_d,-1)$ must have $2$-adic valuation at least $n+1$.
\end{enumerate}
Now, by noting the equality \eqref{8-3-f2}, we have $v(L(\chi_D,-1))\geq n+1$, thus (2c) follows and the proof for the Theorem is done.
\end{proof}

\section{Structure for $2$-primary part of $K_2\CO_F$}\label{k2-str}

Let $K=\BQ(\sqrt{d})$ be a quadratic number field and denote by $\mathcal{C}_K$ the ideal class group of $K$. In this section, we first collect some standard results on $2$-rank and $4$-rank for $\mathcal{C}_K$ and $K_2\CO_K$. Then we give applications of results in section \S\ref{zhao-ch} to determine the $2$-primary structure of $K_2\CO_F$ for real quadratic number fields $F$.

\medskip

Let us denote by $D_K$ the discriminant of $K$ and $t$ the number of prime divisors of $D_K$. Let $\left(\frac{\cdot}{\ell}\right)$ be the Legendre symbol if $\ell$ is an odd prime and $\left(\frac{\cdot}{2}\right)$ the Kronecker symbol.
Let $p^*=\left(\frac{-1}{p}\right)p$ if $p$ is an odd prime and $2^*=\pm4$ or $\pm 8$ such that $D_K=\prod^t_{i=1}p^*_i$. Put $r_{ij}=\frac{1}{2}\left(1-\left(\frac{p^*_i}{p_j}\right)\right)$, where $1\leq i, j\leq t$ and $i\neq j$. Set $r_{kk}=\sum_{i\neq k}r_{ik}$.  We make the convention that $p_1=2$ if $2\mid D_K$.
We then define the Redei matrix $R_K$ associated to $K$ to be
$R_F=(r_{ij})$. Here, the matrix $R_F$ is viewed as a $t\times t$-matrix over the finite field $\BF_2$ with $2$-elements. Denote by $r(R_K)$ the rank of the matrix $R_F$ over $\BF_2$. Recall that the narrow class group $\mathcal{C}^+_K$ of $K$ is
the quotient of the group of fractional $\CO_K$-ideals by the subgroup of principal
ideals $(x)=x\CO_K$ with generator of positive norm $N(x)$. Here, we write $N=N_K$ for the norm map from $K$ to $\BQ$.

\begin{prop}\label{24-pic}
Keep the notations as above. Then
\begin{enumerate}
  \item [(1)]$r_2(\mathcal{C}^+_K)=t-1$.
  \item [(2)]If $K$ is a real quadratic field and $r_4(\mathcal{C}_K)=0$, then $r_2(\mathcal{C}_K)=t-1$\,  (resp. $t-2$) if $-1\in N(K^\times)$ \,  (resp. $-1\notin N(K^\times)$).
  \item [(3)]$r_4(\mathcal{C}^+_K)=t-1-r(R_K)$. In particular, if the $4$-rank of $\mathcal{C}^+_K$ is zero, then so is the ideal class group $\mathcal{C}_K$.
\end{enumerate}
\end{prop}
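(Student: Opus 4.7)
The three claims are classical results in the genus theory of quadratic fields; my plan is to treat them in order, following Gauss--R\'edei--Reichardt.

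For part (1), the plan is to identify $\mathcal{C}^+_K/2\mathcal{C}^+_K$ with $\Gal(K^*/K)$ by class field theory, where $K^*$ is the genus field of $K$ (the maximal unramified extension of $K$ that is abelian over $\BQ$). Kummer theory and ramification considerations give $K^* = K(\sqrt{p_1^*},\ldots,\sqrt{p_t^*})$, and the single relation $\sqrt{D_K} = \prod_i \sqrt{p_i^*} \in K$ yields $[K^* : K] = 2^{t-1}$; hence $r_2(\mathcal{C}^+_K) = t-1$.

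For part (2), I would start from the fundamental exact sequence
\[
1 \to \{\pm 1\}/N(\CO_K^\times) \to \mathcal{C}^+_K \to \mathcal{C}_K \to 1.
\]
When $-1 \in N(K^\times)$ the leftmost group vanishes, so $\mathcal{C}^+_K \cong \mathcal{C}_K$ and (1) gives $r_2(\mathcal{C}_K) = t-1$. When $-1 \notin N(K^\times)$, the kernel is $\BZ/2$ generated by a class $z$. Writing $r = r_2(\mathcal{C}_K)$, the hypothesis $r_4(\mathcal{C}_K) = 0$ makes the $2$-Sylow of $\mathcal{C}_K$ elementary abelian; the standard classification of abelian extensions of $(\BZ/2)^r$ by $\BZ/2$ then shows the $2$-Sylow of $\mathcal{C}^+_K$ is either $(\BZ/2)^{r+1}$ (split case, $z \notin 2\mathcal{C}^+_K$) or $\BZ/4 \oplus (\BZ/2)^{r-1}$ (nonsplit case, $z \in 2\mathcal{C}^+_K$). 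Matching $2$-ranks against (1) forces $r = t-2$ in the first case and $r = t-1$ in the second. I would rule out the nonsplit case by an ambiguous-class computation: $z$ is represented by the narrow class of the fixed ambiguous ideal $(\sqrt{D_K})$, and under the combined hypotheses $N(\epsilon)=+1$ and $r_4(\mathcal{C}_K)=0$, the Hilbert-symbol obstruction to writing $z = \mathfrak{b}^2$ in $\mathcal{C}^+_K$ is nontrivial, so we land in the split case and obtain $r_2(\mathcal{C}_K) = t-2$.

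For part (3), I would invoke R\'edei's theorem. The key identification is
\[
r_4(\mathcal{C}^+_K) = \dim_{\BF_2} \ker\bigl(\phi\colon \mathcal{C}^+_K[2] \to \mathcal{C}^+_K/2\mathcal{C}^+_K\bigr),
\]
both source and target of dimension $t-1$ by (1). Using the ramified primes $\mathfrak{p}_1,\ldots,\mathfrak{p}_t$ (with the one relation $\sum_i [\mathfrak{p}_i] = 0$) as a basis of $\mathcal{C}^+_K[2]$ and the genus characters attached to $\BQ(\sqrt{p_i^*})$ as the dual basis of $\mathcal{C}^+_K/2\mathcal{C}^+_K$, one computes $\chi_j([\mathfrak{p}_i])$ via Hilbert symbols and recovers exactly the matrix $R_K$; compatibility of genus characters with the product relation forces $r(R_K) \le t-1$, giving $r_4(\mathcal{C}^+_K) = (t-1) - r(R_K)$. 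The ``in particular'' clause is then immediate: $r_4(\mathcal{C}^+_K) = 0$ makes the $2$-Sylow of $\mathcal{C}^+_K$ elementary abelian, and its quotient $\mathcal{C}_K$ inherits this, so $r_4(\mathcal{C}_K) = 0$.

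The main obstacle I anticipate is the split/nonsplit dichotomy in part (2) -- verifying that under $r_4(\mathcal{C}_K)=0$ and $N(\epsilon)=+1$, the ambiguous class $z$ really fails to be a square in $\mathcal{C}^+_K$. Everything else reduces to routine class field theory and Kummer-theoretic bookkeeping once the genus-character setup is in place.
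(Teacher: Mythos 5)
Your parts (1) and (3) are sound sketches of standard genus theory and R\'edei's theorem, and your ``in particular'' argument at the end of (3) is exactly the one the paper gives (the paper otherwise simply cites \cite[Theorems (2.1) and (3.1)]{ste} for all three assertions, and only proves that clause, via the sequence $0\to\mathfrak{P}/\mathfrak{P}^+\to\mathcal{C}^+_K\to\mathcal{C}_K\to0$ with $\mathfrak{P}/\mathfrak{P}^+$ killed by $2$). The genuine gap is in your part (2), and it is not the one you flagged. You conflate $-1\in N(K^\times)$ with $-1\in N(\CO_K^\times)$: the kernel in your exact sequence is $\{\pm1\}/N(\CO_K^\times)$, i.e.\ it vanishes if and only if the \emph{fundamental unit} has norm $-1$, and Hasse's norm theorem governs $N(K^\times)$, not $N(\CO_K^\times)$. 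These conditions genuinely differ. For $K=\BQ(\sqrt{34})$ one checks that $-1$ is a local norm at every place, so $-1\in N(K^\times)$, yet $x^2-34y^2=-1$ is the classical insoluble negative Pell equation, so $N(\epsilon)=+1$ and the kernel is $\BZ/2\BZ$; indeed $h(34)=2$ while $h^+(34)=4$. So your first case (``the leftmost group vanishes, hence $\mathcal{C}^+_K\cong\mathcal{C}_K$'') is false as stated; the conclusion $r_2(\mathcal{C}_K)=t-1$ holds for $D=34$ only through your \emph{nonsplit} case, which your dichotomy instead attaches to the wrong norm condition. Symmetrically, in the case $-1\notin N(K^\times)$ the assertion you defer -- that $z=[(\sqrt{D_K})]$ is not a square in $\mathcal{C}^+_K$ -- is precisely the crux and is never proved; invoking ``a nontrivial Hilbert-symbol obstruction'' under the hypotheses $N(\epsilon)=+1$ and $r_4(\mathcal{C}_K)=0$ is assuming what must be shown.

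The repair is to make the genus characters do the work: evaluating the genus character attached to $p_i^*$ on $z$ yields the Hilbert symbol $(-1,D_K)_{p_i}$, and by the product formula and the Hasse norm theorem these all vanish if and only if $-1\in N(K^\times)$; hence $z\in2\mathcal{C}^+_K$ exactly when $-1\in N(K^\times)$, which sorts your split/nonsplit dichotomy by the correct condition. Alternatively, and more cleanly, note that $\mathfrak{a}\cdot\sigma(\mathfrak{a})=(N\mathfrak{a})$ forces $\sigma c=c^{-1}$ on $\mathcal{C}_K$, so $\mathcal{C}_K[2]$ coincides with the group of ambiguous classes, and the ambiguous class number formula gives $\#\mathcal{C}_K[2]=2^{t-1}/[\{\pm1\}:\{\pm1\}\cap N(K^\times)]$ directly; this proves (2) outright (the hypothesis $r_4(\mathcal{C}_K)=0$ is not actually needed for it) and avoids the unit-norm trap entirely.
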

\begin{proof}
All the assertions are well-known. One can find a proof in \cite[Theorems (2.1) and (3.1)]{ste}. The in particular part in (3) can be shown in the following way. Write $\mathfrak{P}$ (resp. $\mathfrak{P}^+$) for the group generated by principal ideals (resp. principal ideals with positive norm generate). Then we have the following exact sequence
\[0\to \mathfrak{P}/\mathfrak{P^+}\to \mathcal{C}^+_K \to \mathcal{C}_K \to 0.\]
Since $\mathfrak{P}/\mathfrak{P^+}$ is annihilated by $2$, we see that the in particular part in (3) follows.
\end{proof}

We now write $F$ for a real quadratic number field with discriminant $D$ and put $E=\BQ(\sqrt{-D})$. Let $S$ be the set of primes in $F$ lying above $2$. We denote by $\CO_{F,S}$ the ring of $S$-integers in $F$ and put $\mathcal{C}_{F,S}$ the ideal class group of $\CO_{F,S}$.
We omit the proof for the below Proposition and one can find a proof in \cite{BS} and \cite{YF} (see also \cite{Q1}).

\begin{prop}\label{24-k2}
Keep the notions as above. Denote by $s=\#(S)$, then we have
\begin{enumerate}
  \item [(1)]
  $r_2(K_2\CO_F)=1+r_2(\mathcal{C}_{F,S})+s$.
  \item [(2)]
  $\left|r_4(K_2\CO_F)-r_4(\mathcal{C}_E)\right|\leq 1$.
\end{enumerate}
\end{prop}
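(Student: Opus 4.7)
The plan is to prove the two parts separately, in each case extracting the statement from a classical descent argument; since both formulas are due to Browkin--Schinzel, Feng--Yue, and Qin, I will only sketch the main ingredients rather than redo the full computations.

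For (1), I would start from the localization sequence in Quillen $K$-theory
\[
1 \lra K_2\CO_F \lra K_2\CO_{F,S} \xrightarrow{\oplus \tau_v} \bigoplus_{v\in S,\, v\nmid \infty} k(v)^\times \lra 1,
\]
where $\tau_v$ is the tame symbol at $v$. Every $v \in S$ lies above $2$, so the residue field $k(v)$ has $2$-power order and hence $k(v)^\times$ has odd order; taking $2$-torsion in the exact sequence therefore yields an isomorphism $K_2\CO_F[2] \xrightarrow{\sim} K_2\CO_{F,S}[2]$, in particular $r_2(K_2\CO_F) = r_2(K_2\CO_{F,S})$. Then I would invoke Tate's classical $2$-rank formula for $K_2$ of an $S$-integer ring (in the form used in \cite{BS} and \cite{Q1}): for $F$ real quadratic, with $r_1(F)=2$ real places included implicitly in $S$, one has
\[
r_2(K_2\CO_{F,S}) \;=\; r_1(F) + s - 1 + r_2(\mathcal{C}_{F,S}) \;=\; s + 1 + r_2(\mathcal{C}_{F,S}).
\]
Combining the two identities gives (1). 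The main care needed here is the signature bookkeeping in Tate's formula, which for a real quadratic $F$ reduces cleanly to the archimedean contribution $r_1 - 1 = 1$.

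For (2), I would run the standard $4$-descent on $K_2\CO_F$, following Feng--Yue \cite{YF}. The group $2K_2\CO_F / 4K_2\CO_F$ embeds into $K_2\CO_F /2$ via $x\mapsto 2x$ and can, using Hilbert symbols at the primes above $2$ and the two real places together with reciprocity, be described as a quotient of a subgroup of symbols of the form $\{-1,\alpha\}$ with $\alpha\in F^\times$ satisfying local square conditions at $S$. Such $\alpha$ have divisor supported away from $S$ that becomes a square after passage to $E=\BQ(\sqrt{-D})$, yielding a well-defined map to $\mathcal{C}_E[2]$; Feng--Yue show that its kernel and cokernel are bounded by the single-dimensional space accounting for the distinguished symbol $\{-1,-1\}$ (or equivalently, by the possible failure of $-1$ to be a norm from $E$). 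This gives an exact sequence
\[
0 \lra A \lra \frac{2K_2\CO_F}{4K_2\CO_F} \lra \mathcal{C}_E[2] \lra B \lra 0
\]
with $\dim_{\BF_2} A,\ \dim_{\BF_2} B \leq 1$ and with total defect at most $1$, which together with the identification $r_4(K_2\CO_F) = \dim_{\BF_2}(2K_2\CO_F/4K_2\CO_F)$ and analogously for $\mathcal{C}_E$ implies $|r_4(K_2\CO_F) - r_4(\mathcal{C}_E)|\leq 1$.

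The main obstacle in each part is not the overall strategy but the detailed local analysis: in (1) one must verify the exact contribution of the two real places and of primes above $2$ to Tate's formula, while in (2) one must carefully identify the Hilbert-symbol duality at $2$-adic and infinite places and trace which ``extra'' $\BF_2$-dimension produces the $\pm 1$ discrepancy with $r_4(\mathcal{C}_E)$. Since both computations are carried out in full in \cite{BS}, \cite{YF}, and \cite{Q1}, it is legitimate simply to cite them here.
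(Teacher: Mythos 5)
The paper gives no proof of this proposition at all: it explicitly omits it and refers to \cite{BS} and \cite{YF} (see also \cite{Q1}), which are exactly the sources you invoke, so in substance your proposal takes the same route as the paper. Your sketch of (1) is sound: the localization sequence, together with the observation that each residue field $k(v)$ at $v\in S$ has $2$-power order so that $k(v)^\times$ has odd order, does identify the $2$-primary parts of $K_2\CO_F$ and $K_2\CO_{F,S}$, and Tate's $2$-rank formula $r_2(K_2\CO_F)=r_1(F)+s-1+r_2(\mathcal{C}_{F,S})$ specializes to the stated identity since $r_1(F)=2$ for real quadratic $F$.

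Your sketch of (2), however, contains a genuine slip. As displayed, your four-term sequence has target $\mathcal{C}_E[2]$, and bounding its kernel and cokernel by one dimension each would yield $\left|r_4(K_2\CO_F)-r_2(\mathcal{C}_E)\right|\leq 1$, since $\dim_{\BF_2}\mathcal{C}_E[2]=r_2(\mathcal{C}_E)$, not $r_4(\mathcal{C}_E)$. That inequality is false in general: in the setting of Proposition \ref{k2-4} of this paper, with $n\geq 5$ odd, one has $r_4(K_2\CO_F)=1$ while $r_2(\mathcal{C}_E)=n-1$, so the discrepancy is $n-2\geq 3$. The Feng--Yue comparison is not with all of the $2$-torsion of $\mathcal{C}_E$ but with the $2$-torsion classes that are \emph{squares}: the correct target (up to the one-dimensional defect you describe) is $\mathcal{C}_E[2]\cap\mathcal{C}_E^2$, equivalently $2\mathcal{C}_E/4\mathcal{C}_E$, whose $\BF_2$-dimension is precisely $r_4(\mathcal{C}_E)$; showing that the classes produced by the symbols $\{-1,\alpha\}$ land in the square classes is exactly where Qin's solvability criteria for elements of order $4$ enter. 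Since you defer the detailed local analysis to \cite{YF} in any case, the repair is simply to replace $\mathcal{C}_E[2]$ by this subgroup, after which your outline agrees with what is actually proved there.
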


\bigskip

Now for a finite abelian group $\mathfrak{A}$, we recall that $\mathfrak{A}(2)$ is the $2$-primary subgroup of $\mathfrak{A}$.
The following two propositions prove the Theorem \ref{main-thm-2} in the introduction.

\begin{prop}\label{k2-2}
Assume that $D=p_1\cdots p_n$ satisfying $p_i$ are primes such that $p_i\equiv 5\mod 8$ and $\left(\frac{p_i}{p_j}\right)=-1$ for $1\leq i<j\leq n$. If $n$ is odd, then we have
\[K_2\CO_F(2)\simeq (\BZ/2\BZ)^{n+1}.\]
\end{prop}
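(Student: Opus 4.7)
The plan is to combine the analytic input from Proposition \ref{mod-8-5}(2) with the $2$-rank formulas in Propositions \ref{24-pic} and \ref{24-k2}. From Corollary \ref{k2-zeta} and $v(L(\chi_D,-1))=n$ one reads off $\#K_2\CO_F(2)=2^{n+1}$, so it suffices to prove $r_2(K_2\CO_F)=n+1$, since this together with the order forces the $2$-part to be elementary abelian of rank $n+1$.

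To compute the $2$-rank I would apply Proposition \ref{24-k2}(1). Since each $p_i\equiv 5\pmod 8$ and $n$ is odd, $D\equiv 5\pmod 8$, so $2$ is inert in $F$ and the prime above $2$ is $\mathfrak{p}=2\CO_F$, which is principal; hence $s=1$ and $\mathcal{C}_{F,S}=\mathcal{C}_F$. The task therefore reduces to proving $r_2(\mathcal{C}_F)=n-1$. Proposition \ref{24-pic}(1) gives $r_2(\mathcal{C}_F^+)=n-1$, so the real issue is to show that the surjection $\mathcal{C}_F^+\to\mathcal{C}_F$ does not drop the $2$-rank, equivalently that $-1\in N(F^\times)$.

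Before applying Proposition \ref{24-pic}(2), I would verify that $r_4(\mathcal{C}_F)=0$ by computing the Redei matrix $R_F$. Each $p_i\equiv 1\pmod 4$ gives $p_i^*=p_i$; the hypothesis $\bigl(\tfrac{p_i}{p_j}\bigr)=-1$ makes every off-diagonal entry $r_{ij}=1$, and the diagonal entries $r_{kk}=n-1$ are even since $n$ is odd. Thus $R_F=J-I$ over $\BF_2$; the all-ones vector spans its kernel (uniquely, using $n$ odd), while row-differences span the even-weight subspace, so $r(R_F)=n-1$. Proposition \ref{24-pic}(3) then yields $r_4(\mathcal{C}_F^+)=0$, hence $r_4(\mathcal{C}_F)=0$.

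Finally I would verify $-1\in N(F^\times)$ via the Hasse norm theorem, i.e., $(-1,D)_v=1$ for every place $v$. This is $1$ at $v=\infty$ (as $D>0$), at every odd $\ell\nmid D$ (both arguments are units), and at each $p_i\mid D$ (where the standard formula reduces $(-1,D)_{p_i}$ to $\bigl(\tfrac{-1}{p_i}\bigr)=1$ because $p_i\equiv 1\pmod 4$); the product formula then forces $(-1,D)_2=1$ as well. Proposition \ref{24-pic}(2) now gives $r_2(\mathcal{C}_F)=n-1$, and combining with Proposition \ref{24-k2}(1) and the order computation yields $r_2(K_2\CO_F)=n+1=v_2(\#K_2\CO_F)$, which proves the claim. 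I expect the last step to be the main obstacle, because the Redei-matrix calculation only pins down $r_2(\mathcal{C}_F)$ to within $1$ of $n-1$, and genuinely distinguishing $n-1$ from $n-2$ requires the norm-theoretic input; invoking the product formula at $v=2$ is the cleanest way to avoid a direct $2$-adic Hilbert symbol computation.
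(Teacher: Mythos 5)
Your proposal is correct and is essentially the paper's own argument: $2$ inert gives $s=1$ and $\mathcal{C}_{F,S}=\mathcal{C}_F$, the norm condition $-1\in N(F^\times)$ together with Proposition \ref{24-pic} gives $r_2(\mathcal{C}_F)=n-1$, hence $r_2(K_2\CO_F)=n+1$ by Proposition \ref{24-k2}, and Proposition \ref{mod-8-5}(2) with Corollary \ref{k2-zeta} fixes the order at $2^{n+1}$, forcing the group to be elementary abelian. Your only deviations are harmless refinements: you verify $-1\in N(F^\times)$ via Hilbert symbols and the product formula where the paper invokes Legendre's theorem for $x^2-Dy^2=-z^2$, and your explicit Redei-matrix check that $r_4(\mathcal{C}_F)=0$ supplies a hypothesis of Proposition \ref{24-pic}(2) that the paper's proof uses tacitly.
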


\begin{proof}
Since $n$ is odd, so $D\equiv 5\mod 8$. Then $2$ is inert in $F$. Therefore, $s=1$ and $\mathcal{C}_{F,S}=\mathcal{C}_F$. We claim that $-1\in N_F(F^\times)$, in fact, by modulo every $p\mid D$ and noting that $p\equiv 1\mod 4$, there exists integers $x,y,z$ not all zero satisfying
\[x^2-y^2D\equiv-z^2 \mod p.\]
Note also that $x^2-y^2D=-z^2$ has solutions in $\BR$. Thus by Legendre's theorem (see \cite[Lemma (2.1)]{Q2}), $x^2-y^2D=-z^2$ has nontrivial solutions in $\BZ$. Therefore, the claim follows.
By Proposition \ref{24-pic}, we get $r_2(\mathcal{C}_F)=n-1$. Note Proposition \ref{24-k2}, then  $r_2(K_2\CO_F)=n+1$. Now the Proposition follows from (2) in Proposition \ref{mod-8-5} and Corollary \ref{k2-zeta}.
\end{proof}

\medskip

\begin{prop}\label{k2-4}
Let $F=\BQ(\sqrt{D})$ with discriminant $D=4\ell_1\cdots \ell_n$ satisfying the prime divisors $\ell_1\equiv 3\mod8$ and $\ell_i\equiv 5\mod 8\,  (2\leq i\leq n)$. Assume that $n$ is odd and $\left(\frac{\ell_i}{\ell_j}\right)=-1$ for $1\leq i<j\leq n$. Then
Then
$K_2\CO_F(2)$ is isomorphic to $(\BZ/2\BZ)^{n-1}\times (\BZ/4\BZ)$ (resp. $(\BZ/2\BZ)^2$) if $n\geq3$ (resp. $n=1$).
\end{prop}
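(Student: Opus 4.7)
The strategy is to apply Theorem~\ref{mod-8-3}(1) to pin down the order of $K_2\CO_F(2)$, then use the Redei matrix together with Propositions~\ref{24-pic}--\ref{24-k2} to compute its $2$-rank, and finally let these two invariants force the isomorphism type.

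Under the hypotheses, Theorem~\ref{mod-8-3}(1) applies (taking $p_i=\ell_i$), yielding $v(L(\chi_D,-1))=n$. Corollary~\ref{k2-zeta} then gives $\#(K_2\CO_F(2))=2^{n+1}$. Since $D=4\ell_1\cdots\ell_n$, the prime $2$ ramifies in $F$; let $\mathfrak{p}$ be the prime above it, so $\mathfrak{p}^2=(2)$ and $s=1$ in the notation of Proposition~\ref{24-k2}.

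Next I claim $r_2(\mathcal{C}_F)=n-1$. With $2^*=-4$, $\ell_1^*=-\ell_1$, and $\ell_i^*=\ell_i$ for $i\geq 2$, the hypotheses determine every entry of the Redei matrix $R_F$ (a $(n+1)\times(n+1)$ matrix over $\BF_2$ indexed by $2,\ell_1,\ldots,\ell_n$). A direct calculation shows that the columns indexed by $2$ and by $\ell_1$ are both identically $1$, while the sub-block indexed by $\ell_2,\ldots,\ell_n$ in both rows and columns is $J-I$ of size $n-1$. As $n-1$ is even, $J-I$ is invertible over $\BF_2$: $(J-I)v=0$ forces $v=c\cdot(1,\ldots,1)$ for some $c\in\BF_2$, and then $(n-1)c=c$ implies $c=0$. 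A short linear-algebra argument then yields $r(R_F)=n$, so $r_4(\mathcal{C}_F^+)=0$, and Proposition~\ref{24-pic}(3) gives $r_4(\mathcal{C}_F)=0$. Since $\ell_1\equiv 3\mod 4$, $-1$ is not a square modulo $\ell_1$, hence not a local norm, so $-1\notin N_F(F^\times)$ by the Hasse norm principle. Proposition~\ref{24-pic}(2) now yields $r_2(\mathcal{C}_F)=n-1$.

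For $n\geq 3$ I argue that $\mathfrak{p}$ is not principal: if $\mathfrak{p}=(a+b\sqrt{m})$ with $m=\ell_1\cdots\ell_n$, then $a^2-mb^2=\pm 2$ and hence $a^2\equiv\pm 2\mod \ell_2$, contradicting $(2/\ell_2)=(-2/\ell_2)=-1$ (valid since $\ell_2\equiv 5\mod 8$). Thus $[\mathfrak{p}]$ is a nonzero class in the elementary abelian group $\mathcal{C}_F(2)\simeq(\BZ/2\BZ)^{n-1}$, so $r_2(\mathcal{C}_{F,S})=r_2(\mathcal{C}_F/\langle[\mathfrak{p}]\rangle)=n-2$. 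For $n=1$ the group $\mathcal{C}_F(2)$ is already trivial, so $r_2(\mathcal{C}_{F,S})=0$. Proposition~\ref{24-k2}(1) then gives $r_2(K_2\CO_F)=1+r_2(\mathcal{C}_{F,S})+1$, equal to $n$ for $n\geq 3$ and to $2$ for $n=1$. Combined with $\#(K_2\CO_F(2))=2^{n+1}$: for $n\geq 3$, a $2$-group of order $2^{n+1}$ with $2$-rank exactly $n$ must be $(\BZ/2\BZ)^{n-1}\times\BZ/4\BZ$, and for $n=1$, order $4$ and rank $2$ give $(\BZ/2\BZ)^2$, as claimed. The principal technical obstacle is the Redei-matrix rank computation: one must tabulate all $r_{ij}$ from the hypotheses---including the Kronecker values arising from $\ell_1\equiv 3\mod 8$ and $\ell_i\equiv 5\mod 8$---and then extract the rank using the parity of $n-1$. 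The subsequent non-principality check for $\mathfrak{p}$ and the final assembly are short.
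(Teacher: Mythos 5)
Your proposal is correct, and its skeleton is the paper's: Theorem~\ref{mod-8-3}(1) with Corollary~\ref{k2-zeta} gives $\#(K_2\CO_F(2))=2^{n+1}$; the Redei matrix (your description of $R_F$, with the two all-ones columns at $2$ and $\ell_1$ and the $(J-I)$-block, matches the paper's block form and indeed has rank $n$) gives $r_4(\mathcal{C}_F)=0$ and, with $-1\notin N(F^\times)$, $r_2(\mathcal{C}_F)=n-1$; non-principality of $\mathfrak{p}$ gives $r_2(\mathcal{C}_{F,S})=n-2$ and hence $r_2(K_2\CO_F)=n$ by Proposition~\ref{24-k2}(1). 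The one genuine divergence is the endgame. The paper additionally passes to $E=\BQ(\sqrt{-D})$, computes $r_4(\mathcal{C}_E)=0$ from the Redei matrix $B_n$, and invokes the Yue--Feng inequality $\left|r_4(K_2\CO_F)-r_4(\mathcal{C}_E)\right|\leq 1$ of Proposition~\ref{24-k2}(2) to get $r_4(K_2\CO_F)\leq 1$ before combining with the order. You instead observe that an abelian $2$-group of order $2^{n+1}$ with exactly $n$ cyclic factors has exponents summing to $n+1$, so exactly one factor is $\BZ/4\BZ$: the $4$-rank bound is automatic, and the whole detour through $E$ is logically unnecessary for this proposition. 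That is a legitimate simplification; the reason the $E$-step appears in the paper is presumably that it is genuinely needed in Theorem~\ref{main-thm-1}, where only the lower bound $v(\#K_2\CO_F)\geq n+2$ is available and $r_4\leq 1$ must be imposed externally to rule out, say, two $\BZ/4\BZ$ factors --- your counting argument would not close that case, so the paper reuses one template for both proofs. Two smaller points in your favour: you actually prove the non-principality of $\mathfrak{p}$ (via $a^2-mb^2=\pm 2$ reduced modulo $\ell_2$, where $\ell_2\equiv 5\bmod 8$ kills both signs; note $\ell_1$ alone would not suffice, since $\left(\frac{-2}{\ell_1}\right)=1$ for $\ell_1\equiv 3\bmod 8$), which the paper only asserts, and you treat $n=1$ uniformly rather than leaving it to the reader.
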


\begin{proof}
The case for $n=1$ is easy, we left to the reader. We now assume that $n\geq3$.
Since $\ell_1\equiv 3\mod 4$, by the same argument as in Proposition \ref{k2-2}, we have $-1\notin N(F^\times)$. Thus from Proposition \ref{24-pic}
we see that $r_2(\mathcal{C}_F)=n-1$. Now, writing $D=p_1^*\cdots p_t^*$ and recalling the convention that $p^*_1=-4$, we know the Redei matrix $R_F$ is given as the following block matrix
\[\left(
    \begin{array}{cc}
      A_2 & \alpha_1 \\
      \beta_1 & B_{n-1} \\
    \end{array}
  \right),
\]
where $A_2$ (resp. $\beta_1$) is a $2\times 2$ (resp. $(n-1)\times2$) matrix with all entries equal to $1$, $B_{n-1}$ is a $(n-1)\times (n-1)$ matrix with all non-diagonal entries equal to $1$ and all diagonal entries equal to $0$, and $\alpha_1$ is a $2\times(n-1)$ matrix with all elements in the first (resp. second) row equal to $0$ (resp. $1$). By applying an elementary row operation to $R_F$, we see that $r(R_F)=n$. Therefore, from Proposition \ref{24-pic}, we get $r_4(\mathcal{C}_F)=0$. Note that $2$ is ramified in $F$, so $s=1$. One can easily show that the unique prime $\mathfrak{p}$ in $F$ lying above $2$ is not principal. So the ideal class of $\mathfrak{p}$ generates a subgroup in $\mathcal{C}_F$ isomorphic to $\BZ/2\BZ$. Therefore, we have
$r_2(\mathcal{C}_{F,S})=n-2$. From Proposition \ref{24-k2}, we get $r_2(K_2\CO_F)=n$. To get the $4$-rank for $K_2\CO_F$, we must study the $4$-rank for $E=\BQ(\sqrt{-D})$. By Proposition \ref{24-pic}, $r_2(\mathcal{C}_E)=n-1$ and the Redei matrix $R_E$ is given by $B_n$. since $n$ is odd, we see that $r(B_n)=n-1$. Therefore, $r_4(\mathcal{C}_E)=0$ and from Proposition \ref{24-k2}, we only have $r_4(K_2\CO_F)\leq 1$. On the other hand, by (1) in Theorem \ref{mod-8-3} and Corollary \ref{k2-zeta}, we have
\[v(\#(K_2\CO_F))=n+1.\]
Then by combining $r_2(K_2\CO_F)=n$ and $r_4(K_2\CO_F)\leq 1$, we must have $r_4(K_2\CO_F)=1$, $r_8(K_2\CO_F)=0$ and
the proposition follows.
\end{proof}

\medskip

Now, we can prove the Theorem \ref{main-thm-1}. It is the application of (2c) in Proposition \ref{mod-8-3} mentioned in section \S\ref{zhao-ch}.

\begin{proof}[Proof of Theorem \ref{main-thm-1}]
The heart part of the proof is (2c) in Theorem \ref{mod-8-3}, which we have already proven. So to complete the proof, we only need to
show that $r_4(K_2\CO_F)\leq 1$ and $r_2(K_2\CO_F)=n$. In fact, from (2c) in Theorem \ref{mod-8-3} and Corollary \ref{k2-zeta}, then the isomorphism in the Theorem follows.

Since $\ell_1\equiv 3\mod 8$ we know that $-1\notin N(F^\times)$. Thus from Proposition \ref{24-pic}, we have $r_2(\mathcal{C}_F)=n-1$.
The Redei matrix $R_F$ is given as
\[R_F=\left(
        \begin{array}{cc}
          C & \gamma_1 \\
          \gamma_2 & B_{n-2} \\
        \end{array}
      \right)\qquad C\in M_3(\BF_2),\]
where $B_{n-2}$ is the same matrix given in Proposition \ref{k2-4}, and $C,\gamma_i(i=1,2)$ can be explicitly determined. Here, we should note that the sum of all elements in the each column is zero.
We can show that $B_{n-2}$ is of full rank, and the rank of $R_F$ is $n$. Therefore, from Proposition \ref{24-pic}, we have $r_4(\mathcal{C}_F)=0$. Note that the ramified prime $\mathfrak{p}$ of $F$ lying above $2$ is not principal. Therefore, $r_2(\mathcal{C}_{F,S})=n-2$ and $s=1$. So we have $r_2(K_2\CO_F)=n$ by Proposition \ref{24-k2}. To show that $r_4(K_2\CO_F)\leq 1$, we consider the field $E=\BQ(\sqrt{-D})$, an easy calculation using Proposition \ref{24-pic} shows that $r_2(\mathcal{C}_E)=n-1$, $r(R_E)=n-1$ and $r_4(\mathcal{C}_E)=0$. Thus, by Proposition \ref{24-k2} we must have $r_4(K_2\CO_E)\leq 1$. The theorem now follows.
\end{proof}

In the final part of this section, we give some numerical data for the case $n=4$ in the following table. The data shows that $\delta$ is always equal to $3$, however, at present we can not prove it theoretically.

\begin{center}
\begin{scriptsize}
\begin{table}[!htbp]
\center
\begin{tabular}{|c|c|c|c|c|c|c|}
    \hline \(\frac{D}{4}\)&\(p_1\)&\(p_2\)&\(p_3\)&\(p_4\)&\(L(\chi_D,-1)\)&\(\delta\)\\
    \hline 7215 & 3 & 5 & 13 & 37 & 240480 & 3\\
    \hline 26455 & 11 & 13 & 5 & 37 & 1997920 & 3\\
    \hline 77415 & 3 & 5 & 13 & 397 & 8824224 & 3\\
    \hline 119535 & 3 & 5 & 13 & 613 & 16692000 & 3\\
    \hline 142935 & 3 & 5 & 13 & 733 & 22829088 & 3\\
    \hline 153735 & 3 & 5 & 37 & 277 & 25086816 & 3\\
    \hline 166335 & 3 & 5 & 13 & 853 & 28304352 & 3\\
    \hline 171015 & 3 & 5 & 13 & 877 & 28115040 & 3\\
    \hline 196359 & 3 & 29 & 37 & 61 & 37791648 & 3\\
    \hline 226655 & 11 & 13 & 5 & 317 & 40287584 & 3\\
    \hline 241215 & 3 & 5 & 13 & 1237 & 48586080 & 3\\
    \hline 243295 & 19 & 13 & 5 & 197 & 53437792 & 3\\
    \hline 257335 & 107 & 5 & 13 & 37 & 60717792 & 3\\
    \hline 283855 & 11 & 13 & 5 & 397 & 67222496 & 3\\
    \hline 311415 & 3 & 5 & 13 & 1597 & 70325856 & 3\\
    \hline 315055 & 131 & 37 & 5 & 13 & 79864160 & 3\\
    \hline 420135 & 3 & 5 & 37 & 757 & 113889888 & 3\\
    \hline 430495 & 179 & 37 & 5 & 13 & 130245856 & 3\\
    \hline 447135 & 3 & 5 & 13 & 2293 & 117673248 & 3\\
    \hline 473415 & 3 & 5 & 37 & 853 & 128212896 & 3\\
    \hline 475215 & 3 & 5 & 13 & 2437 & 131335968 & 3\\
    \hline 490295 & 19 & 13 & 5 & 397 & 129126880 & 3\\
    \hline 504295 & 11 & 173 & 5 & 53 & 167198304 & 3\\
    \hline 545415 & 3 & 5 & 13 & 2797 & 160048800 & 3\\
    \hline 550615 & 43 & 5 & 13 & 197 & 188399904 & 3\\
    \hline 552695 & 11 & 13 & 5 & 773 & 147595872 & 3\\
    \hline 553335 & 3 & 5 & 37 & 997 & 164792736 & 3\\
    \hline 563695 & 11 & 277 & 5 & 37 & 187614944 & 3\\
    \hline 568815 & 3 & 5 & 13 & 2917 & 177955488 & 3\\
    \hline 592215 & 3 & 5 & 13 & 3037 & 186476448 & 3\\
    \hline 603655 & 251 & 37 & 5 & 13 & 209029792 & 3\\
    \hline 606615 & 3 & 5 & 37 & 1093 & 199213344 & 3\\
    \hline 633399 & 3 & 149 & 13 & 109 & 216478368 & 3\\
    \hline 657735 & 3 & 5 & 13 & 3373 & 220949088 & 3\\
    \hline 665223 & 3 & 461 & 13 & 37 & 209098272 & 3\\
    \hline 673215 & 3 & 5 & 37 & 1213 & 233606880 & 3\\
    \hline 685815 & 3 & 5 & 13 & 3517 & 237620448 & 3\\
    \hline 687895 & 19 & 13 & 5 & 557 & 251644512 & 3\\
    \hline 727935 & 3 & 5 & 13 & 3733 & 245403744 & 3\\
    \hline 751335 & 3 & 5 & 13 & 3853 & 264797280 & 3\\
    \hline 755495 & 59 & 13 & 5 & 197 & 237803552 & 3\\
    \hline 757055 & 19 & 13 & 5 & 613 & 232180384 & 3\\
    \hline 790495 & 19 & 53 & 5 & 157 & 312845408 & 3\\
    \hline 798135 & 3 & 5 & 13 & 4093 & 292536288 & 3\\
    \hline 803751 & 3 & 557 & 13 & 37 & 315036576 & 3\\
    \hline 807455 & 11 & 277 & 5 & 53 & 258772704 & 3\\
    \hline 818935 & 43 & 5 & 13 & 293 & 328030688 & 3\\
    \hline 833199 & 3 & 29 & 61 & 157 & 319335264 & 3\\
    \hline 849615 & 3 & 5 & 13 & 4357 & 321461856 & 3\\
    \hline 878415 & 3 & 5 & 157 & 373 & 324090528 & 3\\
    \hline 884455 & 11 & 13 & 5 & 1237 & 376589472 & 3\\
    \hline 886015 & 43 & 5 & 13 & 317 & 395428320 & 3\\
    \hline 886335 & 3 & 5 & 37 & 1597 & 339119712 & 3\\
    \hline 896415 & 3 & 5 & 13 & 4597 & 353689056 & 3\\
    \hline 905255 & 19 & 13 & 5 & 733 & 316252704 & 3\\
    \hline 911495 & 379 & 13 & 5 & 37 & 319484960 & 3\\
    \hline 934935 & 3 & 5 & 157 & 397 & 377527776 & 3\\
    \hline 961935 & 3 & 5 & 13 & 4933 & 386463456 & 3\\
    \hline 973655 & 19 & 37 & 5 & 277 & 357620256 & 3\\
    \hline 981695 & 11 & 13 & 5 & 1373 & 364695008 & 3\\
    \hline 990015 & 3 & 5 & 13 & 5077 & 394553376 & 3\\
    \hline
\end{tabular}
\end{table}
\end{scriptsize}
\end{center}

\bigskip

\noindent Li-Tong Deng,\\
Department of Mathematical Sciences,\\
Tsinghua University, 100084 \\
Beijing, China.\\
{\it denglitong19@gmail.com}

\bigskip

\noindent Yong-Xiong Li,\\
Yau Mathematical Sciences Center,\\
Tsinghua University, 100084 \\
Beijing, China.\\
{\it liyx\_1029@tsinghua.edu.cn}

\end{document}